\documentclass{IEEEtran}
\usepackage{comment}
\usepackage{cite}
\usepackage{amsmath,amssymb,amsfonts,hyperref}
\usepackage{algorithmic}
\usepackage{graphicx}
\usepackage{textcomp}
\usepackage{subcaption}
\usepackage{listings}
\usepackage{xcolor}
\usepackage{amsthm}
\def\BibTeX{{\rm B\kern-.05em{\sc i\kern-.025em b}\kern-.08em
		T\kern-.1667em\lower.7ex\hbox{E}\kern-.125emX}}
\begin{document}
	%%%%%%%%%%%%%%%%%%%%%%%%%%%%%%%%%
	\newtheorem{definition}{ Definition}
	\newtheorem{theorem}{ Theorem}
	\newtheorem{lemma}{ Lemma}
	\newtheorem{proposition}{ Proposition}
	\newtheorem{corollary}{ Corollary}
	\newtheorem{remark}{ Remark}
	%%%%%%%%%
	\title{A Periodic Dichotomy in Linear Control Theory}
	\author{Shichao Ye,  Xingwu Zeng, Can Zhang
		\thanks{This work was partially supported  by the National Natural Science Foundation of China under grant 12422118.}
		\thanks{Shichao Ye is with the School of Mathematics and Statistics, Wuhan University, Wuhan 430072, China (e-mail: 2020312011154@whu.edu.cn).}
		\thanks{Xingwu Zeng is with the School of Mathematics and Statistics, Wuhan University, Wuhan 430072, China (e-mail: xingwuzeng@whu.edu.cn).}
		\thanks{Can Zhang is with the School of Mathematics and Statistics, Wuhan University, Wuhan 430072, China (e-mail: canzhang@whu.edu.cn).}}

	\maketitle
	
	\begin{abstract}
		In this paper, we construct a periodic dichotomy transformation using solutions of periodic Riccati and Lyapunov equations. 
		As an application of this transformation, we provide an explicit representation of the optimal extremal for periodic linear quadratic optimal control problems.
		Specifically, we establish a complete characterization of the optimal extremal under suitable exponential stabilizability and detectability assumptions.
	\end{abstract}
	
	\begin{IEEEkeywords}
		Periodic dichotomy, periodic optimal controls, periodic matrix Riccati equation, periodic matrix Lyapunov equation
	\end{IEEEkeywords}
	
	\section{Introduction}
	\label{sec:introduction}
	A dichotomy transformation is a transformation that decouples a system possessing an ``exponential dichotomy" into two independent subsystems: one exponentially stable in forward time and the other exponentially stable in backward time (see, e.g., \cite{WK}). 
	The authors of \cite{WK} constructed such a transformation using the positive and negative definite solutions of an associated Riccati equation (see \cite[Lemma 3]{WK}).
	The dichotomy transformation technique has been widely applied in the numerical analysis of optimal control problems (see, e.g., \cite{GG, RM}).
	
	In \cite{GG}, an efficient numerical method was developed for solving singularly perturbed matrix differential Riccati equations under stabilizability and observability conditions.
	For the periodic case, several mature approaches for solving the periodic differential Riccati equation (PRDE) have emerged (\cite{GJKSV10,V}).
	Among them, the periodic-generator approach first computes the symplectic monodromy matrix of the Hamiltonian system, reduces it to an ordered real Schur form, and then constructs a linear matrix differential equation whose solution yields the solution of PRDE.
	However, this approach can be numerically unreliable as it requires integrating ODEs with unstable underlying dynamics, leading to error amplification over long periods.
	To address this difficulty, multiple-shooting algorithms subdivide the period into shorter intervals, integrate them separately, and match the segments through suitable boundary conditions to obtain the periodic stabilizing solution.
	Most recently, a convex-optimization reformulation (see, e.g., \cite{GJKSV10}) parameterizes the solution via a truncated Fourier series and recasts the PRDE as a semidefinite program with linear-matrix-inequality constraints. 
	By avoiding the computation of the monodromy matrix, this method scales linearly with respect to the truncation order rather than the period length, thereby providing a scalable framework for large-scale periodic feedback design.
	
	Beyond numerical applications, the dichotomy transformation has proven to be a powerful tool in the analysis of optimal control problems, particularly for investigating the turnpike property (see, e.g., \cite{WK, TZZ, TZ}).
	The underlying mechanism is that the dichotomy structure of the associated Hamiltonian system separates stable and unstable modes, thereby providing a precise description of the long-time behavior of extremals.
	Early works, such as \cite{RM}, employed a dichotomic basis to decompose the Hamiltonian vector field into stable and unstable components, thereby constructing approximations of the optimal solution.
	In \cite{TZ}, a dichotomy transformation was used to analyze the Hamiltonian structure of the linearized extremal system, which plays a central role in establishing the turnpike property for general finite-dimensional nonlinear control systems under the Kalman rank condition.
	Subsequently, \cite{TZZ} extended the turnpike analysis to nonlinear control systems with unbounded operators, including examples arising from partial differential equations.
	Building on these developments, \cite{TZZ2} established an exponential periodic turnpike property for infinite-dimensional linear–quadratic (LQ) problems with periodic coefficients.
	It is worth mentioning that, in the time-invariant LQ setting, \cite[Lemma 1]{TZZ} introduced a dichotomy transformation based on algebraic Riccati and Lyapunov equations, which decouples the Hamiltonian system derived from the Pontryagin maximum principle and forms the structural core of the corresponding turnpike analysis.
	
	Motivated by \cite[Lemma 1]{TZZ}, we establish in this paper a periodic dichotomy transformation constructed from the solutions of periodic Riccati and Lyapunov differential equations under suitable periodic exponential stabilizability and detectability assumptions.
	This work can be viewed as a natural periodic extension of the time-invariant dichotomy transformation in \cite[Lemma 1]{TZZ}.
	
	As an application, we provide an explicit representation of the optimal extremal pair for periodic LQ optimal control problems.
	Moreover, the periodic dichotomy transformation enables us to establish an exponential turnpike property for finite-dimensional LQ systems with periodic coefficients.
	More precisely, for a sufficiently large time horizon, the optimal solution remains exponentially close to a specific periodic orbit during most of the time interval, excluding the transient arcs near the initial and final times.
	Hence, the present work provides a structural explanation of the periodic turnpike phenomenon through the dichotomy transformation framework.
	
	In \cite{Xu21}, periodic exponential stabilizability of a periodic linear control system was shown to be equivalent to a detectability inequality in a Hilbert space setting with bounded control operators. We adopt this stabilizability condition as a standing assumption throughout the paper.
	Using the overtaking criterion, \cite{AL85} established the existence, uniqueness, and a feedback law for infinite-horizon periodic-signal tracking problems, and related the asymptotic behavior of the optimal trajectories to the long-run performance of finite-horizon optimizers.
	Additional technical details on periodic LQ problems can be found in \cite{WX17}.
	
	This paper is organized as follows. 
	In Section~\ref{pd}, we formulate a periodic optimal LQ problem and establish the periodic dichotomy transformation (see Theorem~\ref{dudic}). 
	In Section~\ref{Apl}, we characterize explicitly the extremals of the periodic optimal LQ problem (see Theorem~\ref{pee}) and present a numerical simulation.
	In Section~\ref{Ap2}, we prove the turnpike property for LQ problems with periodic continuous coefficients and provide a numerical simulation.
	In Section~\ref{Ap3}, we prove an exponential estimate for Riccati equation and give a representation for some Cauchy problem by the dichotomy transformation.

	\section{Periodic Dichotomy Transformation}\label{pd}
	This section develops a periodic dichotomy transformation to decouple the optimality system. 
	After formulating the periodic linear quadratic problem and reviewing the periodic Riccati and Lyapunov differential equations, we use their solutions to construct an explicit transformation in Theorem~\ref{dudic}. 
	
	\subsection{Periodic Linear Quadratic Problem}
	
	Let $n,m,k$ be positive integers and $\theta>0$. 
	We consider the following periodic optimal control problem:
	\begin{multline*}
		(\text{Per})_\theta:\quad
		\inf_{\vphantom{d}}\; \frac{1}{2}\int_0^\theta \Bigl( \bigl\|C(t)(y(t)-y_d(t))\bigr\|^2 \\
		+ \bigl\|Q^{1/2}(t)(u(t)-u_d(t))\bigr\|^2 \Bigr)\,\mathrm{d}t
	\end{multline*}
	over all pairs  $(y(\cdot),u(\cdot))\in C([0,\theta];\mathbb R^n)\times L^2(0,\theta;\mathbb R^m)$ satisfying the state equation
	\begin{equation*}
		\left\{
		\begin{split}
			&\dot{y}(t)=A(t)y(t)+B(t)u(t),\;\;t\in(0,\theta),\\
			&y(0)=y(\theta).\\
		\end{split}\right.
	\end{equation*}
	
	Here $A(\cdot) \in C(\mathbb R;\mathbb R^{n\times n})$, $B(\cdot)\in C(\mathbb R;\mathbb R^{n\times m})$, $C(\cdot)\in C(\mathbb R; \mathbb R^{k\times n})$ and $Q(\cdot)\in C(\mathbb R; \mathbb R^{m\times m})$ are time-periodic matrix-valued functions with period $\theta$. The matrix $Q(t)$ is assumed to be symmetric and positive definite for any $t\in\mathbb R$.
	In addition, the tracking terms $y_d(\cdot)\in C([0,\theta];\mathbb R^n)$ and $u_d(\cdot)\in L^2(0,\theta;\mathbb R^m)$ are assumed to be $\theta$-periodic as well.
	
	Existence and uniqueness results for such periodic optimal control problems $(\text{Per})_\theta$, along with necessary and sufficient optimality conditions, are well-established in the literature (see, for instance, \cite{RM}  and references therein).
	It is well known that a pair $(y_\theta(\cdot),u_\theta(\cdot))$ is optimal for $(\text{Per})_\theta$ if and only if there exists an adjoint state $\lambda_\theta(\cdot)\in C([0,\theta];\mathbb R^n)$ such that
	\begin{equation}\label{xiaoextremalsystLQ}
		\left\{
		\begin{aligned}
			\dot{y}_\theta(t) &= A(t)y_\theta(t) + B(t)Q^{-1}(t)B^*(t)\lambda_\theta(t) 
			+ B(t)u_d(t), \\
			\dot{\lambda}_\theta(t) &= C^*(t)C(t)y_\theta(t) - A^*(t)\lambda_\theta(t) 
			- C^*(t)C(t)y_d(t)
		\end{aligned}
		\right.
	\end{equation}
	with the periodic boundary conditions
	\begin{equation}\label{du12061}
		y_\theta(0)=y_\theta(\theta)\;\;\;\text{and}\;\;\;
		\lambda_\theta(0)=\lambda_\theta(\theta).
	\end{equation}
	Moreover, the optimal periodic control is given by 
	\begin{equation}\label{xiao61718}
		u_\theta(t)=u_d(t)+Q^{-1}(t)B^*(t)\lambda_\theta(t),\;\;\text{a.e.}\;\;t\in[0,\theta].
	\end{equation}
	Throughout this paper,  $A^*$ denotes the transpose of a matrix $A$.
	\subsection{Periodic Riccati and Lyapunov differential equations}\label{prlde}
	
	We begin by recalling the concepts of periodic exponential stabilizability and detectability.
	\begin{definition}\label{du1206}
		The periodic pair $(A(\cdot),B(\cdot))$ is called exponentially $\theta$-periodic stabilizable  
		if there exists a $\theta$-periodic continuous matrix function
		$K(\cdot)\in C(\mathbb R;\mathbb R^{m\times n})$ such that the following closed-loop system
		is exponentially stable:
		\begin{equation}
			\dot y(t)=\big(A(t)+B(t)K(t)\big)y(t),\;\;t>0.
		\end{equation} 
		The periodic pair $(A(\cdot), C(\cdot))$ is called exponentially $\theta$-periodic detectable 
		if $(A^*(\cdot),C^*(\cdot))$ is exponentially $\theta$-periodic stabilizable. 
	\end{definition}
	
	The following lemmas regarding the solvability of periodic matrix Riccati and Lyapunov differential equations are well-known (see, e.g., \cite[Theorems 2 and 4]{BCG2} for proofs).
	More precisely, Lemma \ref{ric} provides the necessary and sufficient conditions for the existence of periodic solutions to the periodic matrix Riccati differential equation; while  
	Lemma \ref{lya} addresses the periodic matrix Lyapunov differential equation.
	
	\begin{lemma}\label{ric}
		The periodic matrix Riccati differential equation
		\begin{equation}\label{du12052}
			\begin{split}
				\dot{P}(t)&+A^*(t)P(t)+P(t)A(t)\\
				&-P(t)B(t)Q^{-1}(t)B^*(t)P(t)\\
				&+C^*(t)C(t)=0,\quad t\in\mathbb{R}.
			\end{split}
		\end{equation}
		admits a unique symmetric, positive semidefinite and $\theta$-periodic
		solution $P(\cdot)$, such that the associated closed-loop matrix  $A(\cdot)-B(\cdot)Q^{-1}(\cdot)B^*(\cdot)P(\cdot)$ is exponentially stable\footnote{Indeed, if $A(\cdot)$ is periodic and bounded, then $\dot y(t)=A(t)y(t)$ is exponentially stable if and only if for every fixed $s$, the norm of associated transition matrix $\|\Phi(t,s)\|$ approaches to zero as $t$ tends to infinity.},
		if and only if
		$(A(\cdot),B(\cdot))$ is exponentially $\theta$-periodic stabilizable
		and $(A(\cdot), C(\cdot))$ is exponentially $\theta$-periodic detectable.
	\end{lemma}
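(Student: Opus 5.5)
Both directions of Lemma~\ref{ric} will be handled through the finite- and infinite-horizon LQ problems on $[t_0,\infty)$ together with the monodromy (Floquet) structure. Write $R(t)=B(t)Q^{-1}(t)B^*(t)$ and let $\Phi_M$ denote the transition matrix of $\dot y=M(t)y$.

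\emph{Necessity.} Suppose $P$ is a symmetric, positive semidefinite, $\theta$-periodic solution of \eqref{du12052} with $A-RP$ exponentially stable. Stabilizability is then immediate: take $K(t)=-Q^{-1}(t)B^*(t)P(t)$, which is continuous and $\theta$-periodic. For detectability we argue by contradiction. If $(A,C)$ were not detectable, Floquet theory (reducing to the time-invariant PBH test on the monodromy) would give an eigenvalue $\mu$ of $\Phi_A(\theta,0)$ with $|\mu|\ge1$ and an eigenvector $v$ such that the solution $x(t)=\Phi_A(t,0)v$ satisfies $C(t)x(t)\equiv0$. Rewrite \eqref{du12052} in Lyapunov form for the closed loop $A_c=A-RP$:
\[
\dot P+A_c^*P+PA_c+PRP+C^*C=0 .
\]
Differentiating $x^*Px$ along $\dot x=Ax$ gives $\frac{d}{dt}(x^*Px)=x^*PRPx-x^*C^*Cx=\|Q^{-1/2}B^*Px\|^2$. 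Integrating over one period and using periodicity of $P$ yields $(|\mu|^2-1)\,v^*P(0)v=\int_0^\theta\|Q^{-1/2}B^*Px\|^2\,dt\ge0$. If $|\mu|=1$, then $B^*Px\equiv0$, so $x$ also solves $\dot x=A_cx$ and has $|\mu|=1$, contradicting exponential stability. If $|\mu|>1$, a slightly more careful argument is needed. Here we instead use that $W(t)=\int_t^\infty\Phi_{A_c}^*(s,t)\big(C^*C+PRP\big)\Phi_{A_c}(s,t)\,ds$ equals $P(t)$ by uniqueness for stable periodic Lyapunov equations. Then $x(t)^*P(t)x(t)$ is a closed-loop cost, bounded by $\|P\|_\infty\|x\|^2$, and comparing growth rates will force $B^*Px\equiv0$ again. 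I expect this detectability half of necessity to be delicate.

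\emph{Sufficiency.} Assume stabilizability and detectability. First, for each $t_0$ the value function of $\min\int_{t_0}^\infty(\|Cy\|^2+\|Q^{1/2}u\|^2)$ is finite, because a stabilizing periodic $K$ gives a finite cost. Next, let $P_T(t)$ be the solution of \eqref{du12052} on $[t,T]$ with $P_T(T)=0$. These are monotone increasing in $T$ and uniformly bounded, so $P(t)=\lim_{T\to\infty}P_T(t)$ exists. By $\theta$-shift invariance of the coefficients, $P_{T+\theta}(t+\theta)=P_T(t)$, which makes the limit $\theta$-periodic. It is positive semidefinite and solves \eqref{du12052}, since the limits pass to the integral form. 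The main obstacle is showing that $A-RP$ is exponentially stable. The plan here is as follows. Along closed-loop trajectories, $\int_{t_0}^\infty(\|Cy\|^2+\|Q^{-1/2}B^*Py\|^2)\le y_0^*P(t_0)y_0$, so both $Cy$ and $B^*Py$ are in $L^2$. Write $\dot y=(A-LC)y+LCy-RPy$, where $L$ is a periodic detectability gain making $A-LC$ exponentially stable. Variation of constants then gives $y\in L^2$, and hence $y\to0$. Exponential stability follows from the footnote's criterion, or via a uniform $L^2$ bound and Datko's argument for periodic systems.

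\emph{Uniqueness.} If $P_1,P_2$ are two such solutions, their difference $\Delta$ satisfies
\[
\dot\Delta+(A-RP_1)^*\Delta+\Delta(A-RP_2)=0 .
\]
This is a periodic Sylvester equation with both coefficients exponentially stable, so its only bounded solution is $\Delta=\Phi^*_{A-RP_1}(s,t)\Delta(s)\Phi_{A-RP_2}(s,t)\to0$ as $s\to\infty$. Hence $\Delta\equiv0$.
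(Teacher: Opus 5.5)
Your sufficiency construction (monotone limit of $P_T$, periodicity via the $\theta$-shift, exponential stability via an output-injection gain and the $L^2$ argument) is sound. So are the Sylvester-type uniqueness argument and the $|\mu|=1$ case of necessity. The $|\mu|>1$ case you flag as delicate, however, cannot be completed. Under your reading of the lemma, with uniqueness only among \emph{stabilizing} solutions, necessity of detectability is false.

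Take $n=m=k=1$, $A\equiv B\equiv Q\equiv 1$, $C\equiv 0$. Then $P\equiv 2$ is a symmetric, positive semidefinite, $\theta$-periodic solution of $\dot P+2P-P^2=0$ with $A-BQ^{-1}B^*P\equiv -1$. By your own Sylvester argument it is the only stabilizing one. Yet $(A,C)$ is not detectable, since $A+K_1C\equiv 1$ for every gain $K_1$. In your identity $(|\mu|^2-1)v^*P(0)v=\int_0^\theta\|Q^{-1/2}B^*Px\|^2\,\mathrm{d}t$ one has $\mu=e^{\theta}$, $x(t)=e^{t}$ and $B^*Px=2e^{t}\neq 0$. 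So neither a growth-rate comparison nor the integral representation of $P$ can force $B^*Px\equiv 0$.

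The paper gives no proof and simply cites \cite[Theorems 2 and 4]{BCG2}. In that periodic Riccati theory the lemma is read as follows: there is exactly one symmetric positive semidefinite $\theta$-periodic solution among \emph{all} such solutions, and it is stabilizing. In the example $P\equiv 0$ is a second such solution, so the hypothesis fails.

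With this reading, each direction needs a step you have not supplied.

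\emph{Necessity.} Your limit $P_{\min}=\lim_{T\to\infty}P_T$ uses only stabilizability and is itself a symmetric positive semidefinite periodic solution, so uniqueness forces $P=P_{\min}$. Take an unobservable Floquet solution $x(t)=\Phi_A(t,0)v$ with multiplier $|\mu|\ge 1$. The control $u\equiv 0$ has zero cost, so $v^*P_T(0)v=0$ for every $T$, hence $v^*P(0)v=0$. Your identity then gives $B^*Px\equiv 0$. Thus $x$ is a non-decaying solution of $\dot x=(A-BQ^{-1}B^*P)x$, a contradiction.

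\emph{Sufficiency.} Your Sylvester argument only compares two \emph{stabilizing} solutions. You must also show that under detectability every symmetric positive semidefinite periodic solution is stabilizing. Let $x(t)=\Phi_{A_c}(t,0)v$ be a Floquet solution of the closed loop $A_c=A-BQ^{-1}B^*P$ with $|\mu|\ge 1$. The Lyapunov form of the Riccati equation gives $(|\mu|^2-1)v^*P(0)v=-\int_0^\theta\big(\|Cx\|^2+\|Q^{-1/2}B^*Px\|^2\big)\,\mathrm{d}t$. Hence $Cx\equiv 0$ and $B^*Px\equiv 0$. Then $x$ also solves $\dot x=(A+K_1C)x$ without decaying, which contradicts the exponential stability of $A+K_1C$ for a detectability gain $K_1$.
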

	
	\begin{remark}\label{du120505}
		Suppose that $(A(\cdot),B(\cdot))$ is exponentially $\theta$-periodic stabilizable
		and $(A(\cdot), C(\cdot))$ is exponentially $\theta$-periodic detectable.
		Let $P(\cdot)$ be the solution of \eqref{du12052} as guaranteed by Lemma \ref{ric}, and let $\Psi(\cdot,\cdot)$ denote the transition matrix (or evolution operator) associated with the feedback matrix $A(\cdot)-B(\cdot)Q^{-1}(\cdot)B^*(\cdot)P(\cdot)$. 
		Then, there exist positive constants $c$ and $\lambda$ such that the periodicity property
		\begin{equation}\label{du12051}
			\Psi(t+\theta,s+\theta)=\Psi(t,s) \;\;\;\text{for all}\;\;t>s
		\end{equation}
		and the exponential stability estimate
		\begin{equation}
			\|\Psi(t,s)\|\leq ce^{-\nu (t-s)}\;\;\;\text{for all}\;\;t>s
		\end{equation}
		hold true.
	\end{remark}
	
	\begin{lemma}\label{lya}
		Suppose that $(A(\cdot),B(\cdot))$ is exponentially $\theta$-periodic stabilizable
		and $(A(\cdot), C(\cdot))$ is exponentially $\theta$-periodic detectable. Let $P(\cdot)$ be the unique solution of \eqref{du12052}. Then, the periodic matrix Lyapunov differential equation
		\begin{multline*}
			-\dot{E}(t)+\big(A(t)-B(t)Q^{-1}(t)B^*(t)P(t)\big)E(t)\\
			+E(t)\big(A(t)-B(t)Q^{-1}(t)B^*(t)P(t)\big)^*\\
			-B(t)Q^{-1}(t)B^*(t)=0,\;\;t\in\mathbb R,
		\end{multline*}
		admits a unique symmetric, bounded, and $\theta$-periodic solution $E(\cdot)$, which is explicitly given by
		\begin{equation}\label{lya1}
			E(t)= -\int_{-\infty}^t\Psi(t,s)B(s)Q^{-1}(s)B^*(s)\Psi^*(t,s)\,\mathrm{d}s,
		\end{equation}
		where $\Psi(\cdot,\cdot)$ is the transition matrix associated with $A(\cdot)-B(\cdot)Q^{-1}(\cdot)B^*(\cdot)P(\cdot)$.
	\end{lemma}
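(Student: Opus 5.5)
Write $F(t):=A(t)-B(t)Q^{-1}(t)B^*(t)P(t)$ and $W(t):=B(t)Q^{-1}(t)B^*(t)$. Both are continuous and $\theta$-periodic, and $W(t)$ is symmetric positive semidefinite. By Lemma~\ref{ric} and Remark~\ref{du120505}, the transition matrix $\Psi$ of $F$ satisfies $\Psi(t+\theta,s+\theta)=\Psi(t,s)$ and $\|\Psi(t,s)\|\le ce^{-\nu(t-s)}$ for $t\ge s$. The plan is to check that the explicit formula \eqref{lya1} defines a bounded, symmetric, $\theta$-periodic solution, and then to prove uniqueness in the class of bounded solutions.

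\emph{Well-posedness, boundedness, symmetry.} Set $M:=\sup_s\|W(s)\|<\infty$, which is finite by continuity and periodicity. The integrand in \eqref{lya1} is bounded in norm by $c^2Me^{-2\nu(t-s)}$, so the improper integral converges absolutely and $\|E(t)\|\le c^2M/(2\nu)$ uniformly in $t$. Symmetry is immediate from the form $\Psi W\Psi^*$ with $W$ symmetric.

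\emph{Periodicity.} Substitute $s=\sigma+\theta$ in $E(t+\theta)$ and use the periodicity of $\Psi$ and $W$. The integral then becomes exactly $E(t)$.

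\emph{The equation.} Write
\[
E(t)=-\Psi(t,0)\Big(\int_{-\infty}^t\Psi(0,s)W(s)\Psi^*(0,s)\,\mathrm{d}s\Big)\Psi^*(t,0),
\]
using $\Psi(t,s)=\Psi(t,0)\Psi(0,s)$. The inner integral converges for each $t$: since $\Psi(0,s)=\Psi(0,t)\Psi(t,s)$, the tail is controlled by the bound on $\Psi(t,s)$. Differentiate with the product rule, $\partial_t\Psi(t,0)=F(t)\Psi(t,0)$, and the fundamental theorem of calculus, the latter contributing $-W(t)$. This gives
\[
\dot E=FE+EF^*-W,
\]
which is exactly the stated equation.

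\emph{Uniqueness.} This is the only step needing care, because the equation is posed on all of $\mathbb R$ with no initial condition. Let $E_1,E_2$ be bounded solutions and put $D=E_1-E_2$. Then $\dot D=FD+DF^*$, so
\[
D(t)=\Psi(t,s)D(s)\Psi^*(t,s)\quad\text{for all } s\le t.
\]
Hence $\|D(t)\|\le c^2e^{-2\nu(t-s)}\sup\|D\|$. Letting $s\to-\infty$ forces $D(t)=0$.

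So uniqueness uses only boundedness and backward-time exponential decay, and periodicity of the solution is a consequence rather than an assumption. I do not expect a genuine obstacle. The main point to handle carefully is justifying the differentiation of the improper integral, which the factorization through $\Psi(t,0)$ reduces to the differentiability of an indefinite integral with a continuous integrand.
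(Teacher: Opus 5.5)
Your proof is correct, and it takes a different route from the paper in the simplest sense: the paper gives no argument for this lemma. It states the result as well known and defers to \cite{BCG2}, that is, to the general theory of periodic Lyapunov and Riccati equations. In that literature, periodic solutions are typically treated through the monodromy matrix $\Psi(\theta,0)$: periodicity of $E$ reduces to a discrete Stein equation for $E(0)$, which is uniquely solvable because the characteristic multipliers of $L$ lie in the open unit disc.

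Your argument is a self-contained verification of the explicit formula \eqref{lya1}. It uses only the uniform bound $\|\Psi(t,s)\|\le ce^{-\nu(t-s)}$ and the periodicity of $\Psi$ from Remark~\ref{du120505}.
\begin{itemize}
\item Absolute convergence gives boundedness, with the explicit bound $\sup_t\|E(t)\|\le c^2M/(2\nu)$.
\item The shift $s=\sigma+\theta$ gives periodicity.
\item The factorization $E(t)=-\Psi(t,0)G(t)\Psi^*(t,0)$, with $G(t)=G(0)+\int_0^t\Psi(0,s)W(s)\Psi^*(0,s)\,\mathrm{d}s$, correctly reduces differentiating the improper integral to the fundamental theorem of calculus.
\item The representation $D(t)=\Psi(t,s)D(s)\Psi^*(t,s)$, with $s\to-\infty$, gives uniqueness.
\end{itemize}

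Your approach buys three things. First, a stronger uniqueness statement: uniqueness among all bounded solutions on $\mathbb R$, with symmetry and periodicity obtained as consequences rather than imposed. Second, a direct justification of the integral formula that the paper later relies on for its numerical computation of $E(\cdot)$. Third, an explicit sup-norm bound on $E$.

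The citation buys brevity and places the lemma within a framework that also handles periodic Lyapunov equations without exponential stability. There the integral over $(-\infty,t]$ is unavailable and one must work with the monodromy matrix instead.
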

	
	\subsection{Periodic Dichotomy Transformation}
	
	In this subsection, we construct a linear, periodic dichotomy transformation on $\mathbb R^{2n}$ using the solutions of the periodic Riccati and Lyapunov differential equations introduced in Lemmas \ref{ric} and \ref{lya}.
	
	\begin{theorem}\label{dudic}
		Assume that $(A(\cdot),B(\cdot))$ is exponentially $\theta$-periodic stabilizable and that $(A(\cdot),C(\cdot))$ is exponentially $\theta$-periodic detectable. Then, the linear coupled system
		\begin{equation*}
			\begin{pmatrix}
				\dot{y}(t)\\\dot{\lambda}(t)
			\end{pmatrix}=
			\begin{pmatrix}
				A(t) & B(t)Q^{-1}(t)B^*(t)\\C^*(t)C(t)&-A^*(t)
			\end{pmatrix}\begin{pmatrix}
				y(t)\\ \lambda (t)
			\end{pmatrix},\; t\in\mathbb R,
		\end{equation*}
		can be decoupled into the block-diagonal form
		\begin{equation*}
			\begin{split}
				\begin{pmatrix}\dot{p}(t)\\ \dot{q}(t)\end{pmatrix}
				=
				\begin{pmatrix}
					L(t) & 0 \\
					0 & -L^*(t)
				\end{pmatrix}
				&
				\begin{pmatrix}p(t)\\ q(t)\end{pmatrix},\quad t\in\mathbb{R},
			\end{split}
		\end{equation*}
		where
		\begin{equation}\label{eq:A-BQ-1B*P}
			L(t)=A(t)-B(t)Q^{-1}(t)B^*(t)P(t),\quad t\in\mathbb{R}
		\end{equation}
		by using the linear, bounded, and $\theta$-periodic transformation
		\begin{equation}\label{ducan1}
			\begin{pmatrix}
				p(t)\\ q (t)
			\end{pmatrix}=
			\begin{pmatrix}
				I+E(t)P(t)&E(t)\\P(t)&I
			\end{pmatrix}
			\begin{pmatrix}
				y(t)\\  \lambda (t)
			\end{pmatrix},\quad t\in\mathbb R,
		\end{equation}
		where $P(\cdot)$ and $E(\cdot)$ are the solutions of the periodic Riccati and Lyapunov differential equations given in Lemmas \ref{ric} and \ref{lya}, respectively.
	\end{theorem}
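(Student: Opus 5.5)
The plan is a direct computation. Write the transformation as $T(t)=\begin{pmatrix} I+E P & E\\ P & I\end{pmatrix}$ and the Hamiltonian matrix as $H(t)$. For $z=(y,\lambda)$ solving $\dot z=Hz$, the new variable $w=Tz$ satisfies $\dot w=(\dot T+TH)T^{-1}w$. It therefore suffices to prove two things: $T(t)$ is invertible with bounded, $\theta$-periodic inverse, and
\[
\dot T(t)+T(t)H(t)=\begin{pmatrix} L(t)&0\\0&-L^*(t)\end{pmatrix}T(t).
\]
The identity is easier to handle by factoring
\[
T=\begin{pmatrix} I&E\\0&I\end{pmatrix}\begin{pmatrix} I&0\\P&I\end{pmatrix}.
\]
Both factors are unipotent, so $T^{-1}=\begin{pmatrix} I&0\\-P&I\end{pmatrix}\begin{pmatrix} I&-E\\0&I\end{pmatrix}$ explicitly. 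This inverse is continuous, $\theta$-periodic and bounded, because $P$ and $E$ are: $P$ by Lemma \ref{ric}, and $E$ by Lemma \ref{lya} together with the representation \eqref{lya1} and the estimate of Remark \ref{du120505}. Hence boundedness and periodicity of $T$ and $T^{-1}$ come for free.

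I would perform the conjugation in two stages. \textbf{Stage one:} set $\tilde z=\begin{pmatrix} I&0\\P&I\end{pmatrix}z$, that is, $(y,q)$ with $q=Py+\lambda$. Differentiating and using the Riccati equation \eqref{du12052} gives
\[
\dot y=Ly+BQ^{-1}B^*q,\qquad
\dot q=\dot P y+P(Ay+BQ^{-1}B^*\lambda)+C^*Cy-A^*\lambda.
\]
Substituting $\lambda=q-Py$, the $y$-terms collect into $\dot P+PA+A^*P-PBQ^{-1}B^*P+C^*C=0$. What remains is $\dot q=-(A^*-PBQ^{-1}B^*)q=-L^*q$, using the symmetry of $P$ and $Q$. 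This yields the block upper triangular system
\[
\begin{pmatrix} L&BQ^{-1}B^*\\0&-L^*\end{pmatrix}.
\]

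\textbf{Stage two:} set $p=y+Eq$. Then
\[
\dot p=Ly+BQ^{-1}B^*q+\dot E q-EL^*q=Lp+\bigl(-LE+BQ^{-1}B^*+\dot E-EL^*\bigr)q.
\]
The bracket vanishes exactly by the Lyapunov equation of Lemma \ref{lya}, so $\dot p=Lp$. Finally, $p=y+E(Py+\lambda)=(I+EP)y+E\lambda$, which matches \eqref{ducan1}.

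The argument is essentially algebraic, and I expect no serious obstacle. The only delicate points are bookkeeping. First, one must check that the sign conventions in the Lyapunov equation (the $-\dot E$ and $-BQ^{-1}B^*$ terms) are precisely those that cancel the off-diagonal block; the computation above confirms they are. Second, one must note that $P$ and $E$ are $C^1$, since they solve ODEs with continuous coefficients, so that differentiating $T$ is legitimate.
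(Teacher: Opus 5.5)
Your proposal is correct and is essentially the paper's proof. The paper likewise factors the transformation as $\mathcal T_3\mathcal T_1$. It first conjugates with $\mathcal T_1$, using the Riccati equation, to reach the block upper-triangular matrix $N(t)$, and then conjugates with $\mathcal T_3$, using the Lyapunov equation, to remove the off-diagonal block. The only difference is presentation: you track the variables $q=Py+\lambda$ and $p=y+Eq$ directly rather than the matrix products $\dot{\mathcal T}_i\mathcal T_j+\mathcal T_i M\mathcal T_j$.
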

	
	\begin{proof}
		The proof is inspired by the time-invariant case in \cite[Lemma 1]{TZZ}. Let $I$ denote the identity matrix in $\mathbb R^n$. For each $t\in\mathbb R$, we introduce the following matrices:
		\begin{align*}
			\mathcal T_1(t)&:=\begin{pmatrix} I&0\\P(t)&I \end{pmatrix}, &
			\mathcal T_2(t)&:=\begin{pmatrix} I&0\\-P(t)&I \end{pmatrix}, \\
			\mathcal T_3(t)&:=\begin{pmatrix} I&E(t)\\0&I \end{pmatrix}, &
			\mathcal T_4(t)&:=\begin{pmatrix} I&-E(t)\\0&I \end{pmatrix}.
		\end{align*}
		A direct computation shows that for all $t\in\mathbb R$,
		\begin{equation}\label{n3191}
			\begin{split}
				\mathcal{T}_1(t)\mathcal{T}_2(t)
				=\mathcal{T}_2(t)\mathcal{T}_1(t)
				&=\mathcal{T}_3(t)\mathcal{T}_4(t)
				=\mathcal{T}_4(t)\mathcal{T}_3(t)\\
				&=\begin{pmatrix}I&0\\0&I\end{pmatrix}.
			\end{split}
		\end{equation}
		Let $M(\cdot)$ denote the coefficient matrix of the original coupled system:
		\begin{equation*}
			M(t):=\begin{pmatrix}
				A(t) & B(t)Q^{-1}(t)B^*(t)\\[1mm]
				C^*(t)C(t) & -A^*(t)
			\end{pmatrix},\quad t\in\mathbb{R},
		\end{equation*}
		and
		\begin{equation*}
			L(t):=A(t)-B(t)Q^{-1}(t)B^*(t)P(t),\quad t\in\mathbb{R}.
		\end{equation*}
		Using the periodic differential equations satisfied by $P(\cdot)$ and $E(\cdot)$ (see Lemmas \ref{ric} and \ref{lya}), a straightforward computation yields the following identities for $t\in \mathbb{R}$:
		\begin{gather*}
			\dot{\mathcal T}_1(t) \mathcal T_2(t)= \begin{pmatrix} 0&0\\\dot{P}(t)&0 \end{pmatrix}, \qquad
			\dot{\mathcal T}_3(t)\mathcal T_4(t)= \begin{pmatrix} 0&\dot{E}(t)\\0&0 \end{pmatrix}, \\
			\mathcal T_1(t) M(t)\mathcal T_2(t)=
			\begin{pmatrix}
				L(t)& B(t)Q^{-1}(t)B^*(t)\\
				-\dot P(t)& -L^*(t)
			\end{pmatrix}.
		\end{gather*}
		Defining an intermediate matrix $N(\cdot)$ as
		\begin{equation*}
			\begin{split}
				N(t)&:=
				\dot{\mathcal T_1}(t) \mathcal T_2(t)+\mathcal T_1(t) M(t)\mathcal T_2(t)\\
				&\;=\begin{pmatrix}
					L(t)& B(t)Q^{-1}(t)B^*(t)\\
					0&-L^*(t)
				\end{pmatrix}, \quad t\in \mathbb{R},
			\end{split}
		\end{equation*}
		we further compute
		\begin{equation*}
			\mathcal T_3(t) N(t) \mathcal T_4(t)=\begin{pmatrix}
				L(t)& -\dot E(t) \\
				0&-L^*(t)
			\end{pmatrix}, \quad t\in \mathbb{R}.
		\end{equation*}
		Consequently, we obtain
		\begin{equation*}
			\dot{\mathcal T_3}(t) \mathcal T_4(t)+\mathcal T_3(t) N(t) \mathcal T_4(t) =
			\begin{pmatrix}
				L(t)& 0\\
				0&-L^*(t)
			\end{pmatrix}, \quad t\in \mathbb{R}.
		\end{equation*}
		We now define the transformation matrix $\mathcal T(\cdot)$ from $\mathbb R^{2n}$ to $\mathbb R^{2n}$ as
		\begin{equation}\label{dic}
			\mathcal T(t):=\mathcal T_3(t) \mathcal T_1(t)
			=\begin{pmatrix}
				I+E(t)P(t)&E(t)\\P(t)&I
			\end{pmatrix},\quad t\in\mathbb R.
		\end{equation}
		Clearly, $\mathcal{T}(\cdot)$ is $\theta$-periodic. From \eqref{n3191}, its inverse is given by
		\begin{equation*}
			\mathcal T(t)^{-1}=\mathcal T_2(t) \mathcal T_4(t)=
			\begin{pmatrix}
				I&-E(t)\\-P(t)&I+P(t)E(t)
			\end{pmatrix},\quad t\in\mathbb R.
		\end{equation*}
		Combining the identities derived above, we conclude that
		\begin{equation}\label{zuih1}
			\begin{split}
				&\dot{\mathcal T}(t) \mathcal T(t)^{-1}+\mathcal T(t) M(t) \mathcal T(t)^{-1}\\
				=&\dot{\mathcal T_3}(t) \mathcal T_1(t) \mathcal T_2(t) \mathcal T_4(t)
				+\mathcal T_3(t) \dot{\mathcal T_1}(t) \mathcal T_2(t) \mathcal T_4(t)\\
				&+\mathcal T_3(t) \mathcal T_1(t) M(t) \mathcal T_2(t) \mathcal T_4(t)\\
				=& \dot{\mathcal T_3}(t) \mathcal T_4(t)+\mathcal T_3(t) N(t) \mathcal T_4(t)\\
				=&
				\begin{pmatrix}
					L(t)& 0\\
					0&-L^*(t)
				\end{pmatrix},\quad t\in \mathbb{R}.
			\end{split}
		\end{equation}
		Hence, differentiating \eqref{ducan1} with respect to $t$ yields that
		\begin{equation*}
			\begin{split}
				\begin{pmatrix}
					\dot{p}(t)\\ \dot{q} (t)
				\end{pmatrix}
				&=\dot{\mathcal T}(t)\begin{pmatrix}
					y(t)\\ \lambda (t)
				\end{pmatrix}
				+\mathcal T(t) M(t)\begin{pmatrix}
					y(t)\\ \lambda (t)
				\end{pmatrix}\\
				&=\Big(\dot{\mathcal T}(t) \mathcal T(t)^{-1}+\mathcal T(t) M(t) \mathcal T(t)^{-1}\Big)
				\begin{pmatrix}
					p(t)\\ q (t)
				\end{pmatrix}, t\in\mathbb{R}.
			\end{split}
		\end{equation*}
		This, together with \eqref{zuih1}, completes the proof.
	\end{proof}
	\begin{remark}
		Theorem~\ref{dudic} provides a periodic dichotomy transformation which splits the $2n$-dimensional Hamiltonian system into two invariant $n$-dimensional subsystems. The subsystem associated with $L(\cdot)$ is exponentially stable in forward time (see Remark \ref{du120505}), while the one associated with $-L^*(\cdot)$ is exponentially stable in backward time. This result is a natural periodic extension of the time-invariant dichotomy transformation presented in \cite[Lemma 1]{TZZ}.
	\end{remark}
	
	\section{Representation of the Periodic Extremal}\label{Apl}
	
	\subsection{Characterization}
	
	The aim of this subsection is to provide an explicit characterization of the optimal solution for $(\text{Per})_\theta$. This is achieved by utilizing the periodic dichotomy transformation constructed from the solutions of the periodic Riccati and Lyapunov differential equations, under appropriate exponential stabilizability and detectability assumptions. The main result of this section is stated as follows.
	
	\begin{theorem}\label{pee}
		Assume that $(A(\cdot),B(\cdot))$ is exponentially $\theta$-periodic stabilizable and that $(A(\cdot),C(\cdot))$ is exponentially $\theta$-periodic detectable. Then, $(\text{Per})_\theta$ admits a unique periodic extremal $(y_\theta(\cdot),u_\theta(\cdot),\lambda_\theta(\cdot))$, which is explicitly given by 
		\begin{equation}\label{zq}
			\left\{
			\begin{aligned}
				y_\theta(t) &= z(t)-E(t)q(t), \\
				\lambda_\theta(t) &= -P(t)z(t)+\big(I+P(t)E(t)\big)q(t), \\
				u_\theta(t) &= u_d(t)+Q^{-1}(t)B^*(t)\lambda_\theta(t), 
			\end{aligned}
			\right.\quad t\in[0,\theta],
		\end{equation}
		where $I$ denotes the identity matrix on $\mathbb R^n$, and $P(\cdot)$ and $E(\cdot)$ are the solutions of the periodic matrix Riccati and Lyapunov differential equations given in Lemmas \ref{ric} and \ref{lya}, respectively. The auxiliary variables $z(\cdot)$ and $q(\cdot)$ are defined by 
		\begin{multline}\label{tian6}
			z(t) = \Psi(t,0)\big(I-\Psi(\theta,0)\big)^{-1}\int_0^\theta \Psi(\theta,\tau)g_1(\tau)\,\mathrm{d}\tau \\
			+ \int_0^t\Psi(t,\tau)g_1(\tau)\,\mathrm{d}\tau,\quad t \in[0,\theta],
		\end{multline}
		\begin{multline}\label{tian7}
			q(t) = -\Psi^*(\theta,t)\big(I-\Psi^*(\theta,0)\big)^{-1}\int_0^\theta \Psi^*(\tau,0)g_2(\tau)\,\mathrm{d}\tau \\
			- \int_t^\theta\Psi^*(\tau,t)g_2(\tau)\,\mathrm{d}\tau,\quad t \in[0,\theta],
		\end{multline}
		where $\Psi(\cdot,\cdot)$ is the transition matrix associated with $L(\cdot)$ in \eqref{eq:A-BQ-1B*P}, and
		\begin{align*}
			g_1(\cdot) &= \big(I+ E(\cdot) P(\cdot)\big)B(\cdot)u_d(\cdot) - E(\cdot)C^*(\cdot)C(\cdot)y_d(\cdot), \\
			g_2(\cdot) &= P(\cdot)B(\cdot)u_d(\cdot) - C^*(\cdot)C(\cdot)y_d(\cdot).
		\end{align*}
	\end{theorem}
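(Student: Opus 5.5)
We plan to apply the periodic dichotomy transformation of Theorem~\ref{dudic} to the inhomogeneous extremal system \eqref{xiaoextremalsystLQ}--\eqref{du12061}. This reduces the coupled periodic boundary value problem to two decoupled periodic problems, which can be solved explicitly by variation of constants.

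\emph{Step 1: transforming the system.} Write \eqref{xiaoextremalsystLQ} as $\dot w = M(t)w + f(t)$, where $w=(y_\theta,\lambda_\theta)^*$ and
\[
f(t)=\begin{pmatrix} B(t)u_d(t)\\ -C^*(t)C(t)y_d(t)\end{pmatrix}.
\]
Set $(z,q)^* := \mathcal T(t)w$ with $\mathcal T$ as in \eqref{dic}. By identity \eqref{zuih1},
\[
\frac{\mathrm d}{\mathrm dt}\begin{pmatrix} z\\ q\end{pmatrix}=\begin{pmatrix}L(t)&0\\0&-L^*(t)\end{pmatrix}\begin{pmatrix} z\\ q\end{pmatrix}+\mathcal T(t)f(t).
\]
Computing $\mathcal T f$ directly gives exactly $(g_1,g_2)^*$:
\begin{itemize}
\item first component: $(I+EP)Bu_d - EC^*Cy_d$;
\item second component: $PBu_d - C^*Cy_d$.
\end{itemize}
Since $\mathcal T(\cdot)$ is $\theta$-periodic and invertible, the periodic boundary conditions \eqref{du12061} for $(y_\theta,\lambda_\theta)$ are equivalent to $z(0)=z(\theta)$ and $q(0)=q(\theta)$. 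Inverting via $\mathcal T^{-1}=\mathcal T_2\mathcal T_4$ gives $y_\theta = z - Eq$ and $\lambda_\theta = -Pz + (I+PE)q$. This is the first two lines of \eqref{zq}, and the third line is \eqref{xiao61718}.

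\emph{Step 2: solving the $z$-problem.} Consider $\dot z = Lz + g_1$ with $z(0)=z(\theta)$. Variation of constants gives
\[
z(t)=\Psi(t,0)z(0)+\int_0^t\Psi(t,\tau)g_1(\tau)\,\mathrm d\tau .
\]
The periodicity condition becomes $(I-\Psi(\theta,0))z(0)=\int_0^\theta\Psi(\theta,\tau)g_1(\tau)\,\mathrm d\tau$. Invertibility of $I-\Psi(\theta,0)$ is the key point, and it follows from Remark~\ref{du120505}. By \eqref{du12051}, $\Psi(k\theta,0)=\Psi(\theta,0)^k$, and the bound $\|\Psi(k\theta,0)\|\le ce^{-\nu k\theta}\to0$ shows that the spectral radius of $\Psi(\theta,0)$ is strictly less than $1$. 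Hence $1$ is not an eigenvalue, and the Neumann series also converges. This yields \eqref{tian6} and uniqueness of $z$.

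\emph{Step 3: solving the $q$-problem.} Consider $\dot q=-L^*q+g_2$. Its transition matrix is $\Psi^*(s,t)$, i.e.\ from time $s$ to time $t$; one checks this by differentiating $\Psi^*(s,t)$ in $t$ using $\partial_t\Psi(s,t)=-\Psi(s,t)L(t)$. Solving backward from $\theta$ gives
\[
q(t)=\Psi^*(\theta,t)q(\theta)-\int_t^\theta\Psi^*(\tau,t)g_2(\tau)\,\mathrm d\tau .
\]
At $t=0$, the periodicity condition reads $(I-\Psi^*(\theta,0))q(0)=-\int_0^\theta\Psi^*(\tau,0)g_2\,\mathrm d\tau$, where $q(0)=q(\theta)$. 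The matrix $I-\Psi^*(\theta,0)$ is the transpose of the invertible matrix from Step~2, so it is invertible. Substituting $q(\theta)=q(0)$ into the backward formula gives \eqref{tian7}.

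\emph{Existence and uniqueness.} Existence and uniqueness of the periodic extremal follow because $\mathcal T$ is a bijection between solutions of the two boundary value problems, and each decoupled problem has exactly one solution. Optimality of the extremal then follows from the cited necessary and sufficient conditions. The main obstacle is the invertibility of $I-\Psi(\theta,0)$, i.e.\ the Floquet spectral argument in Step~2. The orientation and sign bookkeeping for the adjoint transition matrix in Step~3 also requires care. Everything else is routine substitution.
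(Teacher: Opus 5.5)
Your proposal is correct and follows essentially the same route as the paper's proof. Both transform the inhomogeneous extremal system with $\mathcal T(t)$ into $\dot z = Lz+g_1$ and $\dot q=-L^*q+g_2$ under periodic boundary conditions, solve each equation by variation of constants using the invertibility of $I-\Psi(\theta,0)$, and then invert the transformation. Your extra details — the spectral-radius argument via $\Psi(k\theta,0)=\Psi(\theta,0)^k$, the check that $\Psi^*(s,t)$ is the transition matrix of $-L^*$, and the bijection argument for existence and uniqueness — spell out steps the paper only asserts or calls analogous.
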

	\begin{proof}
		Applying the dichotomy transformation constructed in Theorem~\ref{dudic},
		\begin{equation}\label{jin4}
			\begin{pmatrix}z(t)\\ q(t)\end{pmatrix}
			=
			\begin{pmatrix}
				I+E(t)P(t) & E(t)\\[1mm]
				P(t) & I
			\end{pmatrix}
			\begin{pmatrix}y_\theta(t)\\ \lambda_\theta(t)\end{pmatrix},
			\quad t\in[0,\theta],
		\end{equation}
		we decouple the Hamiltonian system \eqref{xiaoextremalsystLQ} into two independent differential equations:
		\begin{equation}\label{tian1}
			\dot z(t) = L(t)z(t) + g_1(t),\quad t\in(0,\theta),
		\end{equation}
		and
		\begin{equation}\label{tian2}
			\dot q(t) = -L^*(t)q(t) + g_2(t),\quad t\in(0,\theta), 
		\end{equation}
		where $L(\cdot)$ is given by \eqref{eq:A-BQ-1B*P}.
		
		Next, we determine the initial state $z(0)$ and the final state $q(\theta)$ for equations \eqref{tian1} and \eqref{tian2}, respectively, by employing the periodic boundary conditions \eqref{du12061}. This allows us to explicitly integrate \eqref{tian1} and \eqref{tian2}.
		
		From \eqref{jin4} and the periodic conditions \eqref{du12061}, it is evident that $z(0)=z(\theta)$. Using the Duhamel formula, integrating \eqref{tian1} yields
		\begin{equation}\label{jin7}
			z(\theta)=\Psi(\theta,0)z(0)+\int_0^\theta \Psi(\theta,\tau)g_1(\tau)\,\mathrm{d}\tau.
		\end{equation}
		Due to the periodicity and exponential stability of $\Psi(\cdot,\cdot)$ (see Remark~\ref{du120505}), the matrix $I-\Psi(\theta,0)$ is invertible. Consequently, we obtain
		\begin{equation*}
			z(0)=\big(I-\Psi(\theta,0)\big)^{-1}\int_0^\theta \Psi(\theta,\tau)g_1(\tau)\,\mathrm{d}\tau,
		\end{equation*}
		which implies that
		\begin{multline*}
			z(t)=\Psi(t,0)\big(I-\Psi(\theta,0)\big)^{-1}\int_0^\theta \Psi(\theta,\tau)g_1(\tau)\,\mathrm{d}\tau\\
			+\int_0^t\Psi(t,\tau)g_1(\tau)\,\mathrm{d}\tau,\quad t \in[0,\theta].
		\end{multline*}
		
		On the other hand, applying an analogous argument to \eqref{tian2} in backward time leads to
		\begin{multline*}
			q(t)=-\Psi^*(\theta,t)\big(I-\Psi^*(\theta,0)\big)^{-1}\int_0^\theta \Psi^*(\tau,0)g_2(\tau)\,\mathrm{d}\tau\\
			-\int_t^\theta\Psi^*(\tau,t)g_2(\tau)\,\mathrm{d}\tau,\quad t \in[0,\theta].
		\end{multline*}   
		
		Finally, using the inverse of the periodic linear transformation \eqref{jin4}, we deduce that
		\begin{equation*}
			\begin{pmatrix}
				y_\theta(t) \\ \lambda_\theta(t)
			\end{pmatrix}
			=
			\begin{pmatrix}
				I & -E(t) \\ -P(t) & I+P(t) E(t)
			\end{pmatrix}
			\begin{pmatrix}
				z(t) \\ q(t)
			\end{pmatrix},\quad t\in[0,\theta].
		\end{equation*}
		Combining this with the relation $u_\theta(\cdot) = u_d(\cdot)+Q^{-1}(\cdot)B^*(\cdot)\lambda_\theta(\cdot)$ yields \eqref{zq} and completes the proof.
	\end{proof}
	
	\subsection{A Numerical Simulation}\label{ans}
	In this subsection, the periodic solution of a two-dimensional optimal control problem is numerically computed using the periodic dichotomy approach, and the consistency of this method is illustrated by the turnpike property.
	
	Setting $\theta=2\pi$, we define the coefficient matrices of the periodic optimal control problem as follows:
	\begin{equation*}
		\begin{split}
			A(t)=
			\begin{pmatrix}
				\sin t & \cos^2 t\\
				e^{-\sin t} & -1+\cos t
			\end{pmatrix},\\
			B(t)=C(t)=Q(t)=
			\begin{pmatrix}
				1 & 0\\
				0 & 1
			\end{pmatrix},
		\end{split}
	\end{equation*}
	for $t\in[0,\theta]$, with the tracking terms given by
	\begin{equation*}
		y_d(t)= \begin{pmatrix} \sin t\\ \cos t \end{pmatrix},\quad
		u_d(t)= \begin{pmatrix} 0\\ 0 \end{pmatrix},\quad \text{for } t\in[0,\theta].
	\end{equation*}
	It is straightforward to verify that the pair $(A(\cdot),B(\cdot))$ is exponentially $\theta$-periodic stabilizable by setting the feedback matrix $K(\cdot)=-A(\cdot)-I$. A similar conclusion holds for the pair $(A^*(\cdot),C^*(\cdot))$ by setting $K(\cdot)=-A^*(\cdot)-I$.
	
	The initial step in the numerical implementation is to solve the periodic Riccati differential equation (PRDE). There are a number of approaches to the resolution of PRDEs, including the periodic generator method and the multiple shooting method. However, an alternative method is proposed, which, while less sophisticated, is more straightforward to implement. 
	
	\begin{figure}[htbp]
		\centering
		\begin{subfigure}[b]{0.48\linewidth}
			\includegraphics[width=\linewidth]{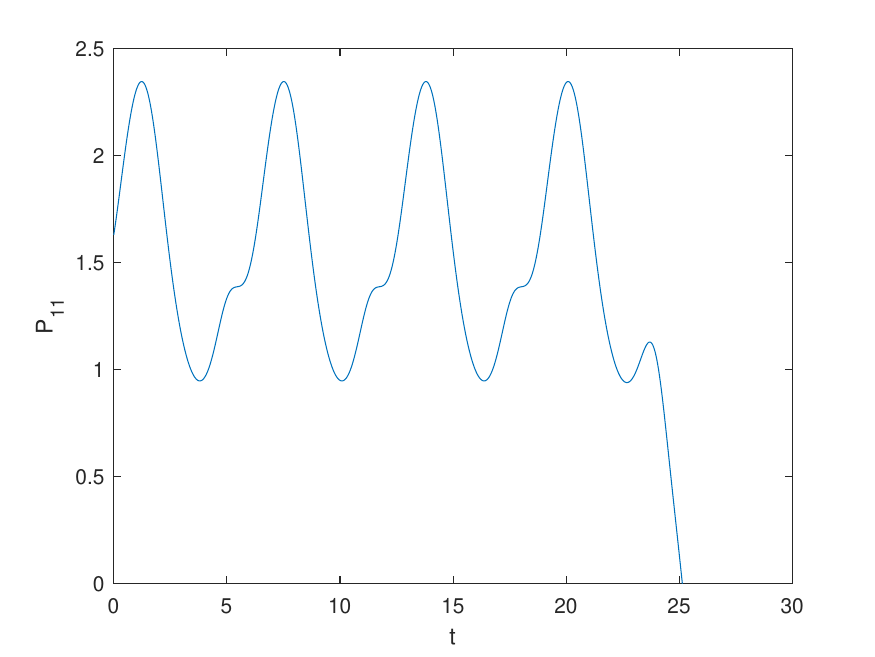}
			\caption{$P_{11}$ (backward)}
			\label{p11b}
		\end{subfigure}\hfill   % ← 关键空隙
		\begin{subfigure}[b]{0.48\linewidth}
			\includegraphics[width=\linewidth]{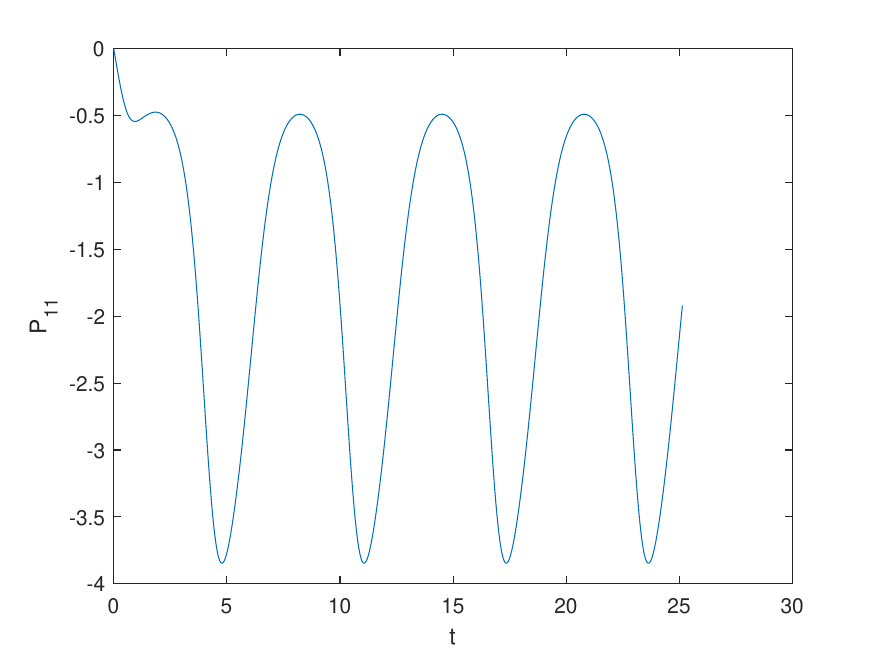}
			\caption{$P_{11}$ (forward)}
			\label{p11f}
		\end{subfigure}
		\caption{Two distinct periodic orbits of $P_{11}$}\label{p11}
	\end{figure}
	Let $P_{11}(\cdot)$ denote the $(1,1)$-entry of the solution to \eqref{du12052}. We observe empirically that when \eqref{du12052} is integrated backward from the final time with an arbitrary initial terminal state, $P_{11}(\cdot)$ converges asymptotically to a stable periodic orbit. This is illustrated in Figure \ref{p11b}, obtained using the MATLAB solver \texttt{ode45}. Conversely, when integrated forward in time, the solution converges to a completely different periodic orbit (Figure \ref{p11f}). The backward-convergent orbit corresponds to the positive semidefinite solution required by our theory, which is confirmed by the non-negativity of its computed characteristic multipliers (eigenvalues), as shown in Figure \ref{eig}.
	\begin{figure}[htbp]
		\centering
		\begin{subfigure}[b]{0.48\linewidth}
			\includegraphics[width=\linewidth]{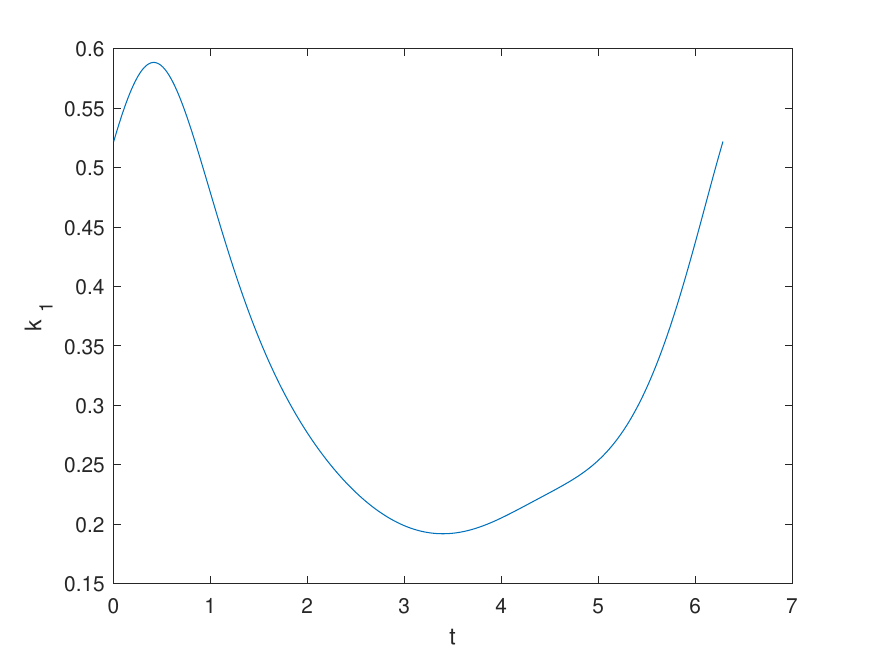}
			\caption{$k_1$}
			
		\end{subfigure}\hfill
		\begin{subfigure}[b]{0.48\linewidth}
			\includegraphics[width=\linewidth]{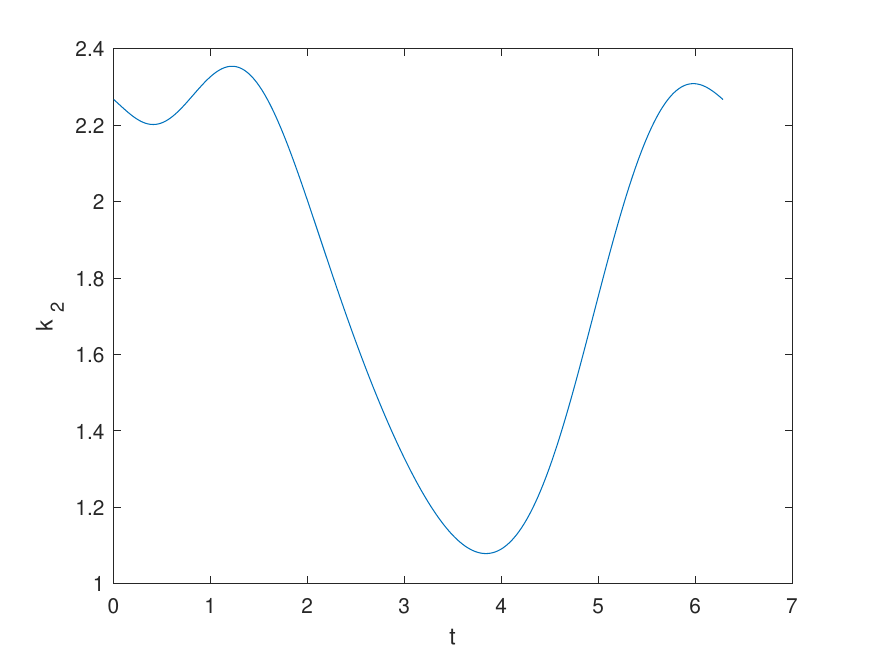}
			\caption{$k_2$}
			
		\end{subfigure}
		\caption{Characteristic multipliers (eigenvalues) of $P$}
		\label{eig}
	\end{figure}
	
	According to Lemma \ref{ric}, equation \eqref{du12052} admits a unique positive semidefinite periodic solution. Therefore, the periodic orbit isolated via backward integration is indeed the desired stabilizing solution. Using this methodology, we obtain the full solution $P(\cdot)$ to the PRDE, depicted in Figure \ref{figp}.
	
	\begin{figure}[htbp]
		\centering
		\begin{minipage}[b]{0.48\linewidth}
			\centering
			\includegraphics[width=\linewidth]{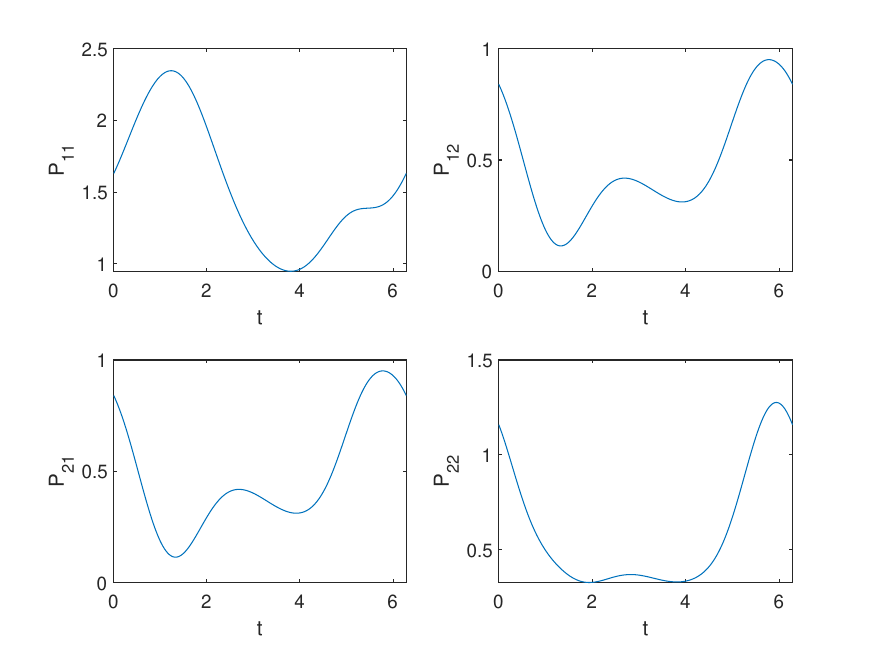}
			\caption{Matrix $P(\cdot)$}
			\label{figp}
		\end{minipage}
		\hfill
		\begin{minipage}[b]{0.48\linewidth}
			\centering
			\includegraphics[width=\linewidth]{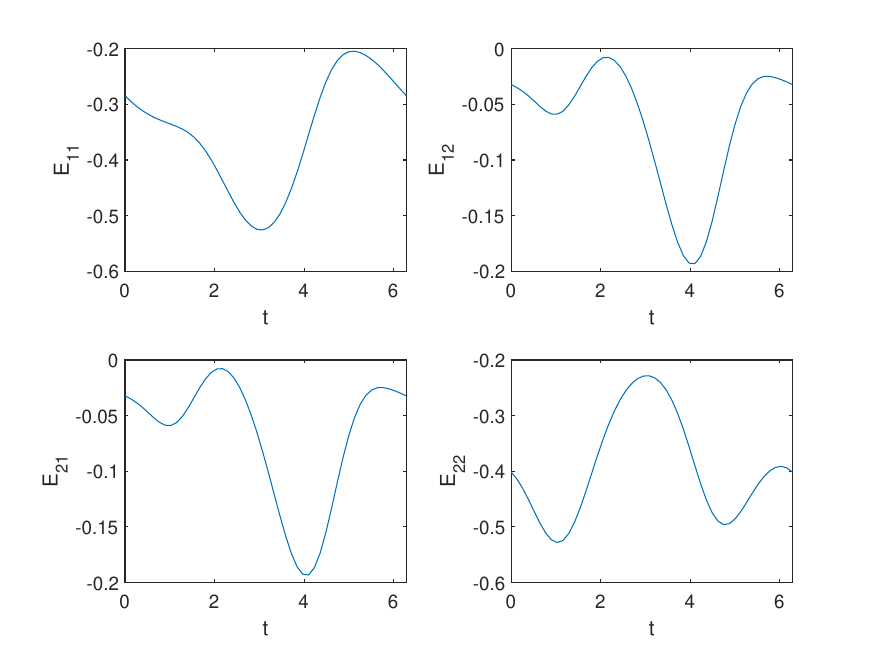}
			\caption{Matrix $E(\cdot)$}
			\label{fige}
		\end{minipage}
	\end{figure}
	
	The second step is to solve the periodic Lyapunov differential equation (PLDE). Although the PLDE is a special case of the PRDE and can be simulated similarly, its analytical solution is explicitly formulated in \eqref{lya1}. Thus, we compute $E(\cdot)$ directly using the MATLAB function \texttt{integral} (see Figure \ref{fige}).
	
	Finally, we compute the auxiliary variables $z(\cdot)$ and $q(\cdot)$ via \eqref{tian6} and \eqref{tian7}, and subsequently recover the optimal extremal through the transformation \eqref{zq}. The unique optimal extremal triplet $(y_\theta(\cdot),\lambda_\theta(\cdot),u_\theta(\cdot))$ is displayed in Figure \ref{yper}.
	
	\begin{figure*}[t]          % 跨双栏，只能 [t] 或 [b]
		\centering
		\begin{subfigure}[b]{0.32\textwidth}
			\includegraphics[width=\linewidth]{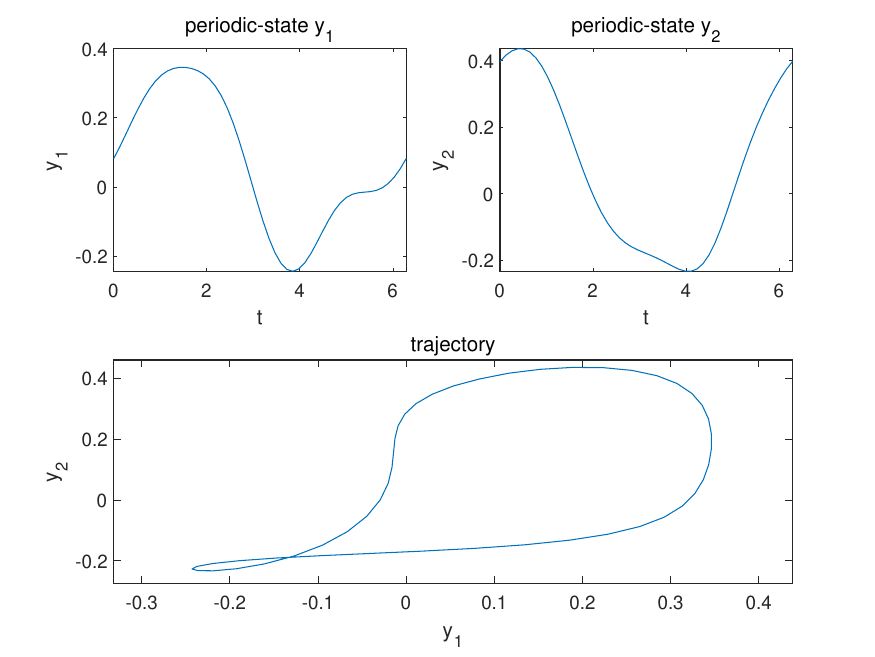}
			\caption{$y_\theta$}
		\end{subfigure}\hfill
		\begin{subfigure}[b]{0.32\textwidth}
			\includegraphics[width=\linewidth]{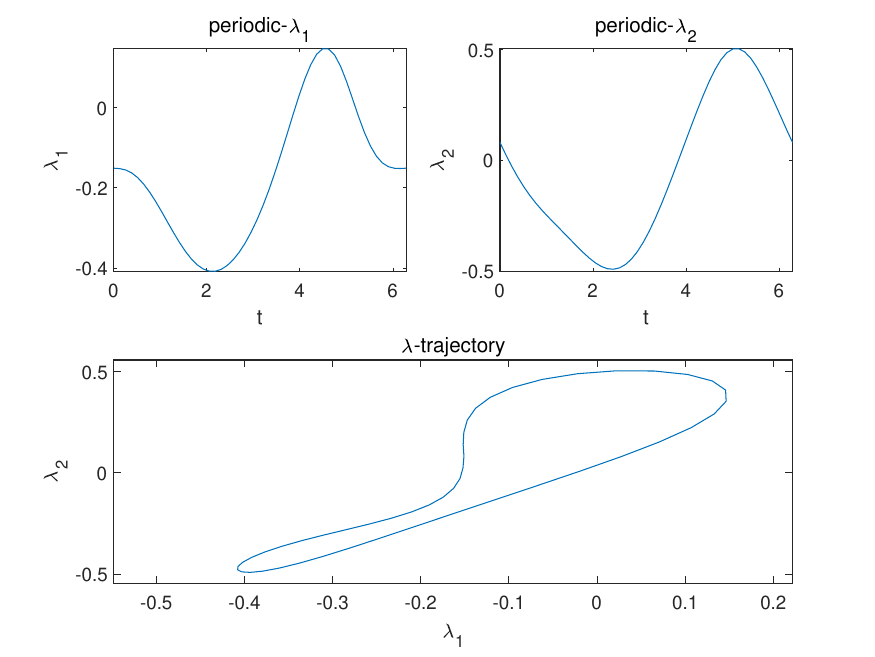}
			\caption{$\lambda_\theta$}
		\end{subfigure}\hfill
		\begin{subfigure}[b]{0.32\textwidth}
			\includegraphics[width=\linewidth]{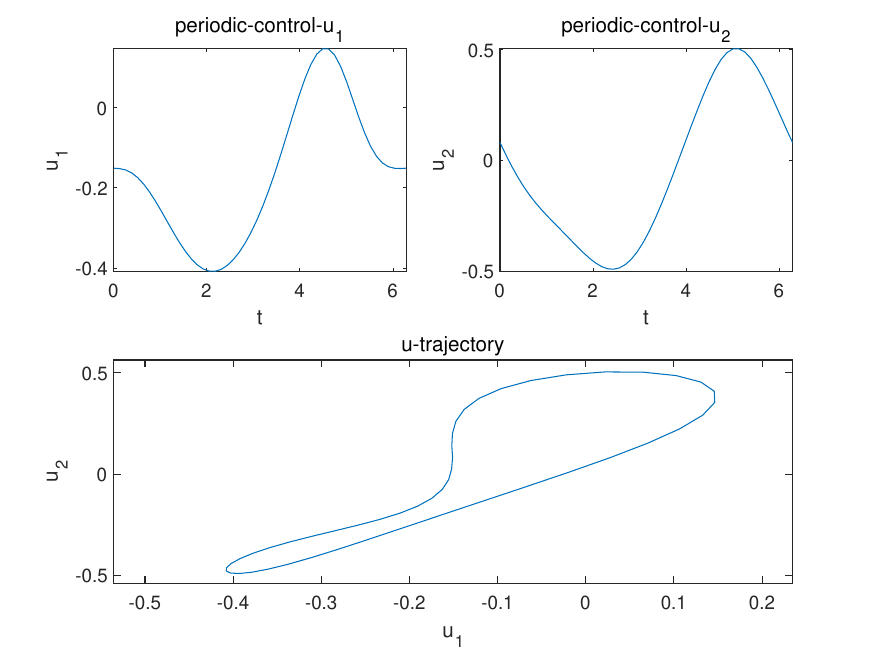}
			\caption{$u_\theta$}
		\end{subfigure}
		\caption{Periodic optimal extremal $(y_\theta, \lambda_\theta, u_\theta)$}
		\label{yper}
	\end{figure*}
	\begin{remark}
		Let us illustrate the repelling/attracting nature of the orbits shown in Figures \ref{p11b} and \ref{p11f} with a simple scalar example. Consider a one-dimensional Riccati equation with coefficients $A=B=Q=1$ and $C=\sqrt{3}$. The dynamics are governed by
		\begin{equation}\label{1-dp}
			\dot{p}(t) = -2p(t)+p^2(t)-3 = (p(t)-1)^2-4,\quad\text{for }t\in[0,\theta].
		\end{equation}
		It is clear from \eqref{1-dp} that $p_1 = -1$ and $p_2 = 3$ are the two equilibrium points. Analyzing the sign of the derivative, we observe that
		\begin{equation*}
			\left\{
			\begin{aligned}
				&\dot{p}(t) > 0, \quad \text{for } p(t) > p_2, \\
				&\dot{p}(t) < 0, \quad \text{for } p_1 < p(t) < p_2, \\
				&\dot{p}(t) > 0, \quad \text{for } p(t) < p_1,
			\end{aligned}
			\right.
		\end{equation*}
		for $ t\in[0,\theta]$.
		This shows that $p_1=-1$ is an attracting equilibrium (sink) while $p_2=3$ is a repelling equilibrium (source). Consequently, backward integration naturally isolates the positive definite solution $p_2=3$.
	\end{remark}
	\begin{remark}
		In the preceding discussion, we observed empirically the existence of both positive semidefinite and negative semidefinite periodic solutions. It is conceivable to derive a representation analogous to Theorem \ref{pee} by employing the dichotomy transformation introduced in \cite{WK}:
		\begin{equation*}
			\begin{pmatrix}
				p(\cdot)\\ q (\cdot)
			\end{pmatrix}=
			\begin{pmatrix}
				I & I \\
				P(\cdot) & N(\cdot)
			\end{pmatrix}
			\begin{pmatrix}
				y(\cdot)\\ \lambda (\cdot)
			\end{pmatrix},
		\end{equation*}
		where $P(\cdot)$ and $N(\cdot)$ denote the positive semidefinite and negative semidefinite solutions of the PRDE, respectively. However, the existence of the negative semidefinite solution in the general periodic case has not yet been fully addressed. This gap remains an interesting topic for future investigation.
	\end{remark}
	
	\section{Periodic Turnpike}\label{Ap2}
	
	\subsection{Periodic turnpike property}\label{pt}
	In this section, we show that the periodic dichotomy transformation developed in the previous sections provides a structural framework for establishing the periodic turnpike property for linear--quadratic (LQ) optimal control problems with periodic tracking trajectories over large time horizons.
	The exponential periodic turnpike property established in \cite{TZZ2} reveals that the optimal trajectory of the given LQ optimal control problem consists of three pieces: the first and last being transient short-time arcs, while the middle piece stays exponentially close to a periodic orbit associated with a periodic LQ optimal control problem. This property holds identically for the optimal control and the adjoint state.
	
	We consider the following optimal control problem, denoted by $(\text{LQ})^T$. For any $T>0$, consider the linear control system
	\begin{equation*}
		\left\{
		\begin{aligned}
			&\dot{y}(t)=A(t)y(t)+B(t)u(t),\quad t\in (0,T),\\
			&y(0)=y_0,
		\end{aligned}
		\right.
	\end{equation*}
	and the optimal control problem of minimizing the cost functional
	\begin{equation*}
		\begin{aligned}[b]
			(\text{LQ})^T:\quad
			\inf_{u(\cdot)}\; C_T(u) &= \frac{1}{2}\int_0^T \Bigl( \bigl\|C(t)(y(t)-y_d(t))\bigr\|^2 \\
			&\quad + \bigl\|Q^{1/2}(t)(u(t)-u_d(t))\bigr\|^2 \Bigr)\,\mathrm{d}t.
		\end{aligned}
	\end{equation*}
	The assumptions on the coefficient matrices are identical to those in problem $(\text{Per})_\theta$. It is well known that problem $(\text{LQ})^T$ admits a unique optimal solution pair $(y^T(\cdot),u^T(\cdot))$. Moreover, according to the Pontryagin maximum principle (see, e.g., \cite{L}), there exists an adjoint state $\lambda^T(\cdot) \in C([0,T];\mathbb R^n)$ such that
	\begin{equation}\label{eq:LQT_system}
		\left\{
		\begin{aligned}
			\dot{y}^T(t) &= A(t)y^T(t) + B(t)Q^{-1}(t)B^*(t)\lambda^T(t) + B(t)u_d(t), \\[1mm]
			\dot{\lambda}^T(t) &= C^*(t)C(t)y^T(t) - A^*(t)\lambda^T(t) - C^*(t)C(t)y_d(t),
		\end{aligned}
		\right.
	\end{equation}
	for $t\in (0,T)$, subject to the two-point boundary conditions
	\begin{equation}
		y^T(0)=y_0 \quad \text{and} \quad \lambda^T(T)=0.
	\end{equation}
	Moreover, the optimal control is given by 
	\begin{equation}
		u^T(t)=u_d(t)+Q^{-1}(t)B^*(t)\lambda^T(t),\quad \text{a.e. } t\in(0,T).
	\end{equation}
	
	We next give a representation of the solution to the adjoint equation in the following lemma.
	\begin{lemma}\label{varphi}
		The solution of the adjoint equation
		\begin{equation}\label{varphieq}
			\left\{
			\begin{aligned}
				&\dot{\varphi}(t)=-A^*(t) \varphi(t),\quad t\in [0, T],\\
				&\varphi(T)=\varphi_T ,
			\end{aligned}
			\right.
		\end{equation}
		is given by
		\begin{equation*}
			\varphi(t)=U^*_A(T,t)\varphi_T, \quad \forall t \in[0,T],
		\end{equation*}
		where $U_A(\cdot,\cdot)$ is the transition matrix (evolution operator) associated with $A(\cdot)$.
	\end{lemma}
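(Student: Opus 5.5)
The plan is to verify directly that $\varphi(t):=U_A^*(T,t)\varphi_T$ satisfies both the terminal condition and the differential equation in \eqref{varphieq}, and then to invoke uniqueness for linear ODEs with continuous coefficients. Recall that the transition matrix $U_A(\cdot,\cdot)$ associated with $A(\cdot)$ is characterized by
\[
\partial_t U_A(t,s)=A(t)U_A(t,s),\qquad U_A(s,s)=I,
\]
together with the cocycle property $U_A(t,r)U_A(r,s)=U_A(t,s)$. In particular $U_A(t,s)$ is invertible with $U_A(t,s)^{-1}=U_A(s,t)$. The terminal condition is immediate, since $\varphi(T)=U_A^*(T,T)\varphi_T=\varphi_T$.

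The only step requiring care is computing the derivative of $U_A(T,t)$ with respect to its \emph{second} argument. I will differentiate the identity $U_A(T,t)U_A(t,T)=I$ in $t$:
\[
\partial_t U_A(T,t)\,U_A(t,T)+U_A(T,t)A(t)U_A(t,T)=0 .
\]
Multiplying on the right by $U_A(t,T)^{-1}=U_A(T,t)$ gives
\[
\partial_t U_A(T,t)=-U_A(T,t)A(t).
\]
Differentiability of $t\mapsto U_A(T,t)$ follows because it is the inverse of the $C^1$ invertible map $t\mapsto U_A(t,T)$, so this manipulation is legitimate.

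Taking transposes then yields
\[
\dot\varphi(t)=\big(\partial_tU_A(T,t)\big)^*\varphi_T=-A^*(t)U_A^*(T,t)\varphi_T=-A^*(t)\varphi(t),
\]
so $\varphi$ solves \eqref{varphieq}. Finally, since $A^*(\cdot)$ is continuous, the terminal-value problem \eqref{varphieq} has a unique solution by the standard Cauchy--Lipschitz theorem applied backward in time. Hence the solution is exactly $\varphi(t)=U_A^*(T,t)\varphi_T$ for all $t\in[0,T]$.
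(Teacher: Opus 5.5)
Your proof is correct, but it runs in the opposite direction from the paper's. You start from the candidate $U_A^*(T,t)\varphi_T$ and compute $\partial_t U_A(T,t)=-U_A(T,t)A(t)$ by differentiating $U_A(T,t)U_A(t,T)=I$. You then check that the candidate solves \eqref{varphieq} and appeal to Cauchy--Lipschitz uniqueness to conclude it is \emph{the} solution. The paper instead starts from the solution $\varphi$, whose existence it takes for granted, and identifies it by duality. For each $s\in[0,T]$ and $y_0\in\mathbb R^n$, the pairing $\langle U_A(t,s)y_0,\varphi(t)\rangle$ is constant on $[s,T]$, so $\langle U_A(T,s)y_0,\varphi_T\rangle=\langle y_0,\varphi(s)\rangle$. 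Since $y_0$ is arbitrary, this gives $\varphi(s)=U_A^*(T,s)\varphi_T$.

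The trade-off is this. The paper only uses the defining property of $U_A$ in its first argument and never differentiates in the second argument. It gets uniqueness for free, since every solution must equal the formula, but relies on existence. Your argument supplies existence constructively and needs uniqueness from outside. In finite dimensions both are complete, and yours is the more direct verification. The paper's pairing argument is the same duality device reused in the proof of Lemma~\ref{upperbound}. It also adapts more readily to settings such as evolution operators with unbounded generators, where differentiating in the second argument needs extra care.
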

	\begin{proof}
		For any $y_0 \in \mathbb R^n$ and each $s \in [0,T]$, consider the forward equation 
		\begin{equation*}
			\left\{
			\begin{aligned}
				&\dot{y}(t)=A(t) y(t),\quad t\in [s, T],\\
				&y(s)=y_0.
			\end{aligned}
			\right.
		\end{equation*}
		By definition, its solution is given by $y(t)=U_A(t,s)y_0$.
		Differentiating the inner product $\langle y(t), \varphi(t)\rangle$ with respect to $t$ yields
		\begin{equation*}
			\frac{\mathrm{d}}{\mathrm{d}t}\left\langle y(t), \varphi(t)\right\rangle = \left\langle A(t)y(t), \varphi(t)\right\rangle + \left\langle y(t), -A^*(t)\varphi(t)\right\rangle = 0.
		\end{equation*}
		Integrating over $t \in [s,T]$, we obtain
		\begin{equation*}
			\left\langle y(T), \varphi(T)\right\rangle=\left\langle y(s), \varphi(s)\right\rangle.
		\end{equation*}
		Hence we have 
		\begin{equation*}
			\left\langle U_A(T,s)y_0, \varphi_T\right\rangle=\left\langle y_0, \varphi(s)\right\rangle,
		\end{equation*}
		which implies that
		\begin{equation*}
			\left\langle y_0, U^*_A(T,s)\varphi_T\right\rangle=\left\langle y_0, \varphi(s)\right\rangle,\quad \forall s \in [0,T].
		\end{equation*}
		Due to the arbitrariness of $y_0$, we conclude that $\varphi(s)=U^*_A(T,s)\varphi_T$ for all $s \in[0,T]$.
	\end{proof}
	
	Before presenting the proof of the exponential periodic turnpike property, we establish an essential stability estimate in Lemma~\ref{upperbound}. This lemma extends the time-invariant result of \cite[Lemma 2]{TZZ} to time-varying systems.
	\begin{lemma}\label{upperbound}
		Assume that $(A(\cdot),B(\cdot))$ is exponentially $\theta$-periodic stabilizable, and that $(A(\cdot),C(\cdot))$ is exponentially $\theta$-periodic detectable. Then, there exists a constant $c>0$, independent of $T$, such that the stability estimate 
		\begin{equation}\label{7271}
			\|y(T)\|+\|\lambda(0)\|\leq c\big (\|y(0)\|+\|\lambda(T)\|\big)
		\end{equation}
		holds for any solution $(y(\cdot),\lambda(\cdot))\in C([0,T];\mathbb R^n)\times C([0,T];\mathbb R^n)$ of the linear coupled system
		\begin{equation}\label{6181}
			\left\{
			\begin{aligned}
				\dot y(t) &= A(t) y(t)+B(t)Q^{-1}(t)B^*(t)\lambda(t),\\
				\dot\lambda(t)&= C^*(t)C(t)y(t)-A^*(t)\lambda(t),
			\end{aligned}
			\right.\quad t\in(0,T).
		\end{equation}
	\end{lemma}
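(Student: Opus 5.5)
We use the periodic dichotomy transformation of Theorem~\ref{dudic}. Let $(y(\cdot),\lambda(\cdot))$ solve \eqref{6181} on $[0,T]$ and set
\begin{equation*}
p(t)=(I+E(t)P(t))y(t)+E(t)\lambda(t),\qquad q(t)=P(t)y(t)+\lambda(t).
\end{equation*}
By Theorem~\ref{dudic}, $\dot p=L(t)p$ and $\dot q=-L^*(t)q$ on $(0,T)$. Hence
\begin{equation*}
p(t)=\Psi(t,0)p(0),\qquad q(t)=\Psi^*(T,t)q(T),
\end{equation*}
where the second formula comes from the argument of Lemma~\ref{varphi} with $A$ replaced by $L$. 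Remark~\ref{du120505} then gives
\begin{equation*}
\|p(t)\|\le ce^{-\nu t}\|p(0)\|,\qquad \|q(t)\|\le ce^{-\nu(T-t)}\|q(T)\|.
\end{equation*}
The constant is uniform in $T$ because $\Psi$ is periodic. Since $P(\cdot)$ and $E(\cdot)$ are continuous and $\theta$-periodic, they are bounded on $\mathbb R$ by a constant $K$ independent of $T$. The inverse matrix $\mathcal T(t)^{-1}$ is therefore bounded by the same kind of uniform constant.

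The difficulty is that \eqref{7271} involves the mixed data $y(0)$ and $\lambda(T)$, not $p(0)$ and $q(T)$. We must express $p(0)$ and $q(T)$ through $y(0)$ and $\lambda(T)$. The inverse transformation gives
\begin{equation*}
y=p-Eq,\qquad \lambda=-Pp+(I+PE)q.
\end{equation*}
Evaluating at $t=0$ and $t=T$ gives the linear system
\begin{equation*}
p(0)=y(0)+E(0)\Psi^*(T,0)q(T),\qquad q(T)=\lambda(T)+P(T)\Psi(T,0)p(0).
\end{equation*}
The second identity uses $(I+PE)q-Pp=\lambda$, so that $q=\lambda+Pp-PEq$. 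Let me redo this carefully. From $q=Py+\lambda$ directly, $q(T)=P(T)y(T)+\lambda(T)$ and $y(T)=p(T)-E(T)q(T)$. Hence
\begin{equation*}
(I+P(T)E(T))\,q(T)=\lambda(T)+P(T)\Psi(T,0)p(0).
\end{equation*}
Here $I+PE$ is invertible because $\mathcal T$ is invertible; in fact $I+PE=\det$-compatible since $E\le0\le P$, so $PE$ has nonpositive real eigenvalues. For a uniform bound I would instead keep the coupled form and estimate both unknowns at once.

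\textbf{Main step.} Denote $a=\|p(0)\|$ and $b=\|q(T)\|$. Using $p(0)=y(0)+E(0)q(0)$ and $\|q(0)\|\le ce^{-\nu T}b$ gives $a\le\|y(0)\|+Kce^{-\nu T}b$. The $q(T)$ relation gives $b\le K'(\|\lambda(T)\|+Kce^{-\nu T}a)$, where $K'$ bounds $(I+P(t)E(t))^{-1}$ uniformly in $t$; this bound exists by periodicity and continuity. For $T\ge T_0$, with $T_0$ chosen so that $K'K^2c^2e^{-2\nu T_0}\le1/2$, the system closes and yields
\begin{equation*}
a+b\le C(\|y(0)\|+\|\lambda(T)\|).
\end{equation*}
Then $y(T)=p(T)-E(T)q(T)$ and $\lambda(0)=-P(0)p(0)+(I+P(0)E(0))q(0)$ are bounded by $C(a+b)$, which proves \eqref{7271} for $T\ge T_0$.

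\textbf{Short horizons.} For $T<T_0$ the argument above does not close, and this is the main obstacle. Here we need that the map $(y(0),\lambda(T))\mapsto(y(T),\lambda(0))$ is well defined and bounded uniformly for $T\in(0,T_0]$. I would argue via the Riccati flow: the Hamiltonian two-point problem is uniquely solvable because it is the optimality system of the strictly convex LQ problem with terminal cost $-\langle\lambda(T),y(T)\rangle$. Alternatively, in the decoupled variables the matrix of the linear system in $(p(0),q(T))$ is $I$ minus a small perturbation with invertible structure; I would avoid relying on this and use the following energy identity. Pairing the equations gives
\begin{equation*}
\frac{\mathrm d}{\mathrm dt}\langle y,\lambda\rangle=\|Q^{-1/2}B^*\lambda\|^2+\|Cy\|^2\ge0.
\end{equation*}
This identity gives uniqueness, hence invertibility, for each $T$, and continuity in $T$ on the compact interval $[\varepsilon,T_0]$ then gives a uniform constant. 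Near $T=0$ the map tends to the identity, so the bound holds there as well.
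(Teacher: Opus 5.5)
\textbf{Gap: invertibility of $I+P(t)E(t)$.} Your large-$T$ step rests entirely on a uniform bound $K'$ for $(I+P(t)E(t))^{-1}$, and neither reason you give for invertibility holds. Invertibility of $\mathcal T(t)$ says nothing about invertibility of the $(2,2)$ block $I+PE$ of $\mathcal T(t)^{-1}$. The sign information $E\le 0\le P$ only shows that $PE$ has real nonpositive eigenvalues, which does not exclude the eigenvalue $-1$. ``Periodicity and continuity'' give a uniform bound only once pointwise invertibility is known.

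This is not a technicality. Take $n=m=k=1$, $A=a>0$, $B=Q=1$, $C=0$. Then $P\equiv 2a$ solves \eqref{du12052} with $L=-a$ exponentially stable, $E\equiv -1/(2a)$ is given by \eqref{lya1}, and $1+PE=0$. Detectability fails here. Yet every property of $P$ and $E$ your argument actually uses holds: the decoupling, the exponential bounds on $\Psi$, boundedness, and $E\le 0\le P$. And \eqref{7271} is false, since $\lambda(T)=0$ forces $\lambda\equiv 0$ and $y(T)=e^{aT}y(0)$. So the invertibility of $I+PE$ is exactly where detectability has to enter, and your proposal never invokes it.

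\textbf{How to close it.} Suppose $(I+P(t_0)E(t_0))\xi=0$ with $\xi\neq 0$. Set $p\equiv 0$, $q(t)=\Psi^*(t_0,t)\xi$ for $t\le t_0$, and $(y,\lambda)=(-Eq,(I+PE)q)=\mathcal T^{-1}(p,q)$. This solves the homogeneous Hamiltonian system on $(-\infty,t_0]$ and decays exponentially as $t\to-\infty$. It satisfies $\lambda(t_0)=0$, and $y(t_0)\neq 0$ because $\mathcal T(t_0)$ is invertible.

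Integrating $\frac{\mathrm d}{\mathrm dt}\langle y,\lambda\rangle=\|Cy\|^2+\|Q^{-1/2}B^*\lambda\|^2$ over $(-\infty,t_0]$ gives $Cy\equiv 0$ and $B^*\lambda\equiv 0$. Hence $\lambda\equiv 0$ and $\dot y=(A+K_1C)y$, where $K_1$ is a periodic gain making $A+K_1C$ exponentially stable; this is the point where detectability is used. Since $y$ is bounded on $(-\infty,t_0]$, forward exponential stability forces $y(t_0)=0$, a contradiction.

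With this lemma $K'$ exists. Your closing argument for $T\ge T_0$ then goes through, and so does your short-horizon argument for $T\in(0,T_0]$ (uniqueness from the energy identity, continuity in $T$, and the limit as $T\to 0$).

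\textbf{Comparison with the paper.} The paper's proof does not use the transformation at all. It bounds $\|y(T)\|$ by duality with the adjoint of $A+K_1C$, and $\|\lambda(0)\|$ by duality with a stabilizing feedback. Each bound is in terms of the boundary data plus $\int_0^T(\|Cy\|^2+\|Q^{-1/2}B^*\lambda\|^2)\,\mathrm dt$. It then closes with the same energy identity and Young's inequality, uniformly for all $T>0$ and without splitting the horizon.
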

	\begin{proof}
		Since $(A(\cdot),C(\cdot))$ is exponentially $\theta$-periodic detectable, 
		there exists a continuous $\theta$-periodic feedback matrix $K_1(\cdot)\in C(\mathbb R;\mathbb R^{n\times k})$ such that the closed-loop system
		\begin{equation}
			\dot z(t)=\big(A(t)+K_1(t)C(t)\big)z(t),\quad t>0,
		\end{equation}is exponentially stable.
		Let $\varphi(\cdot) \in C\big([0,T]; \mathbb{R}^n\big)$ be the unique solution of 
		\begin{equation*}
			\left\{
			\begin{aligned}
				&\dot{\varphi}(t)=-\big(A^*(t)+C^*(t) K^*_1(t)\big) \varphi(t), \quad t \in[0, T], \\
				&\varphi(T)=y(T).
			\end{aligned}
			\right.
		\end{equation*}
		By Lemma~\ref{varphi} and the exponential decay of the associated transition matrix, 
		there exists a constant $C_1 > 0$ (independent of $T$) such that
		\begin{equation}
			\|\varphi (t)\| \leq C_1 \|y(T)\|,\quad \forall t\in [0,T].
		\end{equation}
		Differentiating the inner product $\langle \varphi(t), y(t)\rangle$, we have
		\begin{equation*}
			\begin{split}
				&\frac{\mathrm{d}}{\mathrm{d}t}\langle \varphi(t), y(t)\rangle \\&= \langle-(A^*(t)+C^*(t) K^*_1(t)) \varphi(t), y(t)\rangle \\
				&\quad + \langle \varphi(t), A(t) y(t)+B(t)Q^{-1}(t)B^*(t)\lambda(t)\rangle\\
				&= \langle \varphi(t), -K_1(t)C(t) y(t)+B(t)Q^{-1}(t)B^*(t)\lambda(t)\rangle.
			\end{split}
		\end{equation*}
		Integrating this over $t \in [0,T]$ yields
		\begin{equation*}
			\begin{split}
				&\left\|y(T) \right\| ^2=\langle \varphi(0), y(0)\rangle\\
				&+\int_0^T \langle \varphi(t), -K_1(t)C(t) y(t)
				+B(t)Q^{-1}(t)B^*(t)\lambda(t)\rangle  \,\mathrm{d}t.
			\end{split}
		\end{equation*} 
		We deduce that
		\begin{equation*}
			\begin{split}
				&\int_0^T \bigl| \langle \varphi(t), K_1(t)C(t) y(t)\rangle \bigr|\,\mathrm{d}t \\&\leq \left( \int_0^T \| K_1 C y\|^2\,\mathrm{d}t \right)^{1/2} \left( \int_0^T \| \varphi\|^2\,\mathrm{d}t \right)^{1/2} \\
				&\leq C_2 \max_{t \in [0,\theta]} \|K_1(t)\| \|y(T)\| \left( \int_0^T \|C y\|^2\,\mathrm{d}t \right)^{1/2}, 
			\end{split}
		\end{equation*}
		and similarly,
		\begin{equation*}
			\begin{split}
				&\int_0^T \bigl| \langle \varphi(t), B(t)Q^{-1}(t)B^*(t)\lambda(t)\rangle \bigr|\,\mathrm{d}t\\ &\leq C_3 \max_{t \in [0,\theta]} \|B(t)Q^{-1/2}(t)\| \|y(T)\| \\
				&\quad \times \left( \int_0^T \|Q^{-1/2}B^*\lambda\|^2\,\mathrm{d}t \right)^{1/2},
			\end{split}
		\end{equation*}
		for  some positive constant $C_2$ and $C_3$  (independent of $T$). It follows that
		\begin{equation}\label{yT}
			\begin{split}
				\left\|y(T) \right\|^2 
				&\leqslant C_4\Bigg( \left\| y(0) \right\|^2 +\int_0^T \left\| C(t) y(t)\right\|^2 \,\mathrm{d}t\\
				&\quad + \int_0^T\left\|Q^{-1/2}(t)B^*(t)\lambda(t) \right\|^2 \,\mathrm{d}t\Bigg),
			\end{split}
		\end{equation}
		for  some positive constant $C_4$ (independent of $T$).
		
		Similarly, since $(A(\cdot),B(\cdot))$ is exponentially $\theta$-periodic stabilizable, we obtain from the second equation in $\eqref{6181}$ that
		\begin{equation}\label{lambda0}
			\begin{split}
				\left\|\lambda(0) \right\|^2 
				&\leqslant C_5\Bigg( \left\| \lambda(T) \right\|^2 +\int_0^T \left\| C(t) y(t)\right\|^2 \,\mathrm{d}t\\
				&\quad + \int_0^T\left\|Q^{-1/2}(t)B^*(t)\lambda(t) \right\|^2 \,\mathrm{d}t\Bigg),
			\end{split} 
		\end{equation}
		for  some positive constant $C_5$ (independent of $T$).
		
		Let $C_6=\max\{C_4,C_5\}$. 
		Differentiating the duality pairing $\langle y(t), \lambda(t)\rangle$ along the trajectories of $\eqref{6181}$, we obtain
		\begin{equation*}
			\begin{split}
				\frac{\mathrm{d}}{\mathrm{d}t}\langle y(t), \lambda(t)\rangle &= \langle A(t) y(t)+B(t)Q^{-1}(t)B^*(t)\lambda(t), \lambda(t)\rangle \\
				&\quad + \langle y(t), C^*(t)C(t)y(t)-A^*(t)\lambda(t)\rangle\\
				&= \|Q^{-1/2}(t)B^*(t)\lambda(t)\|^2 + \|C(t) y(t)\|^2.
			\end{split}
		\end{equation*}
		Integrating over $t \in [0,T]$ and applying the Cauchy-Schwarz and Young's inequalities, we have
		\begin{equation*}
			\begin{split}
				&\int_0^T \|C(t) y(t)\|^2\,\mathrm{d}t + \int_0^T \|Q^{-1/2}(t)B^*(t)\lambda(t)\|^2\,\mathrm{d}t \\&= \langle y(T),\lambda(T)\rangle - \langle y(0),\lambda(0)\rangle \\
				&\leq \|y(T)\| \|\lambda(T)\| + \|y(0)\| \|\lambda(0)\|\\
				&\leq \frac{1}{4C_6}\|y(T)\|^2 + C_6\|\lambda(T)\|^2 + C_6\|y(0)\|^2 + \frac{1}{4C_6}\|\lambda(0)\|^2.
			\end{split}
		\end{equation*}
		This, along with $\eqref{yT}$ and $\eqref{lambda0}$, implies that
		\begin{equation*}
			\begin{split}
				&\quad \int_0^T \left\| C(t) y(t)\right\|^2 \,\mathrm{d}t + \int_0^T\left\|Q^{-1/2}(t)B^*(t)\lambda(t) \right\|^2 \,\mathrm{d}t\\
				&\leqslant C_7\left( \left\|y(0) \right\|^2+\left\| \lambda(T) \right\|^2\right),
			\end{split}
		\end{equation*}
		for some positive constant $C_7$ independent of $T$. Re-applying $\eqref{yT}$ and $\eqref{lambda0}$ leads directly to \eqref{7271}, which completes the proof.
	\end{proof}
	
	It should be emphasised that the turnpike property in the following theorem has been established in \cite[Theorem~2.1]{TZZ2}. Here we give an alternative proof relying on the periodic dichotomy transformation.
	\begin{theorem}\label{tp}
		Assume that $(A(\cdot),B(\cdot))$ is exponentially $\theta$-periodic stabilizable, and that $(A(\cdot),C(\cdot))$ is exponentially $\theta$-periodic detectable. Then, there exist positive constants $c$ and $\nu$ such that, for any $T>0$,
		\begin{multline}\label{19}
			\|y^T(t)-y_\theta(t)\| + \|\lambda^T(t)-\lambda_\theta(t)\| + \|u^T(t)-u_\theta(t)\| \\ \leq c(e^{-\nu t} + e^{-\nu (T-t)} ), \quad \forall t \in (0,T),
		\end{multline}
		where $(y_\theta(\cdot),\lambda_\theta(\cdot),u_\theta(\cdot))$ is the optimal extremal of the periodic problem $(\text{Per})_\theta$ extended by $\theta$-periodicity.
	\end{theorem}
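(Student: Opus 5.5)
Set $\delta y(t):=y^T(t)-y_\theta(t)$ and $\delta\lambda(t):=\lambda^T(t)-\lambda_\theta(t)$ on $[0,T]$, where $(y_\theta,\lambda_\theta)$ is the $\theta$-periodic extension of the extremal from Theorem~\ref{pee}. Both $(y^T,\lambda^T)$ and $(y_\theta,\lambda_\theta)$ satisfy the affine Hamiltonian system \eqref{eq:LQT_system}, and the forcing terms $B u_d$ and $-C^*Cy_d$ are the same. Hence $(\delta y,\delta\lambda)$ solves the homogeneous system \eqref{6181}, with boundary data $\delta y(0)=y_0-y_\theta(0)$ and $\delta\lambda(T)=-\lambda_\theta(T)$. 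Both data are bounded independently of $T$, since $y_\theta,\lambda_\theta$ are continuous and periodic. Because $u^T-u_\theta=Q^{-1}B^*\delta\lambda$ and $Q^{-1}B^*$ is periodic and continuous, hence bounded, it suffices to prove \eqref{19} for $\|\delta y\|+\|\delta\lambda\|$.

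Next I apply the dichotomy transformation of Theorem~\ref{dudic} to $(\delta y,\delta\lambda)$, obtaining $(p,q)=\mathcal T(t)(\delta y,\delta\lambda)$ with $\dot p=L p$ and $\dot q=-L^*q$. By Remark~\ref{du120505} these give
\begin{equation*}
p(t)=\Psi(t,0)p(0),\qquad q(t)=\Psi^*(T,t)q(T),
\end{equation*}
and therefore $\|p(t)\|\le c e^{-\nu t}\|p(0)\|$ and $\|q(t)\|\le c e^{-\nu(T-t)}\|q(T)\|$. Since $\mathcal T(\cdot)$ and $\mathcal T(\cdot)^{-1}$ are periodic and continuous, they are uniformly bounded. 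Transforming back gives
\begin{equation*}
\|\delta y(t)\|+\|\delta\lambda(t)\|\le C\bigl(e^{-\nu t}\|p(0)\|+e^{-\nu (T-t)}\|q(T)\|\bigr).
\end{equation*}

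The remaining task, and the main obstacle, is to bound $\|p(0)\|$ and $\|q(T)\|$ uniformly in $T$. We have $p(0)=(I+E(0)P(0))\delta y(0)+E(0)\delta\lambda(0)$ and $q(T)=P(T)\delta y(T)+\delta\lambda(T)$. The known endpoint quantities $\delta y(0)$ and $\delta\lambda(T)$ are uniformly bounded, but $\delta\lambda(0)$ and $\delta y(T)$ are not given a priori. Lemma~\ref{upperbound} supplies exactly this control: applied to the homogeneous system \eqref{6181}, it gives
\begin{equation*}
\|\delta y(T)\|+\|\delta\lambda(0)\|\le c\bigl(\|\delta y(0)\|+\|\delta\lambda(T)\|\bigr)\le c\bigl(\|y_0\|+\max_{[0,\theta]}\|y_\theta\|+\max_{[0,\theta]}\|\lambda_\theta\|\bigr),
\end{equation*}
where $c$ is independent of $T$. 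Combined with the uniform bounds on $P$ and $E$, this shows that $\|p(0)\|$ and $\|q(T)\|$ are bounded independently of $T$.

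Inserting these bounds into the previous estimate yields \eqref{19}, with a constant $c$ depending on $y_0$ and the data but not on $T$. The estimate for $u^T-u_\theta$ then follows from the identity $u^T-u_\theta=Q^{-1}B^*\delta\lambda$ noted at the start.
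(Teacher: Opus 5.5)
Your proposal is correct and follows the paper's proof essentially step for step: you subtract the periodic extremal, decouple the homogeneous difference system with the dichotomy transformation of Theorem~\ref{dudic}, and use Lemma~\ref{upperbound} to bound the a priori unknown endpoint values $\delta y(T)$ and $\delta\lambda(0)$, and hence $p(0)$ and $q(T)$, uniformly in $T$. You then transform back using the exponential decay of $\Psi$; note that your formula $q(t)=\Psi^*(T,t)q(T)$ is the correct one, and the extra minus sign in the paper's version is a harmless typo since only norms are used.
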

	\begin{proof}
		Setting
		\begin{equation*}
			\begin{split}
				&\delta y(\cdot)=y^T(\cdot)-y_\theta(\cdot),\\
				&\delta \lambda(\cdot)=\lambda^T(\cdot)-\lambda_\theta(\cdot),\\
				&\delta u(\cdot)=u^T(\cdot)-u_\theta(\cdot).
			\end{split}
		\end{equation*}
		From the definition, the following holds
		\begin{equation*}
			\begin{pmatrix}
				\dot{\delta y}(t)\\\dot{\delta \lambda}(t)
			\end{pmatrix}=
			\begin{pmatrix}
				A(t) & B(t)Q^{-1}(t)B^*(t)\\C^*(t)C(t)&-A^*(t)
			\end{pmatrix}\begin{pmatrix}
				\delta y(t)\\ \delta \lambda (t)
			\end{pmatrix},
		\end{equation*}
		for every $t\in(0,T)$, with the terminal conditions
		\begin{equation*}
			\delta y(0) = y_0-y_\theta(0), \quad \text{and} \quad \delta \lambda(T) = -\lambda_\theta\left(T-\lfloor T/\theta \rfloor \theta \right),
		\end{equation*}
		where the $\left\lfloor \cdot \right\rfloor$ is the floor function. 
		Using the periodic dichotomy transformation \eqref{ducan1} introduced in Theorem \ref{dudic}, we decouple the system into
		\begin{equation*}
			\begin{pmatrix}
				\dot{p}(t)\\ \dot{q}(t)
			\end{pmatrix} =
			\begin{pmatrix}
				L(t) & 0 \\
				0 & -L^*(t)
			\end{pmatrix}
			\begin{pmatrix}
				p(t)\\ q(t)
			\end{pmatrix},\quad t\in(0,T),
		\end{equation*}
		where $L(\cdot) = A(\cdot)-B(\cdot)Q^{-1}(\cdot)B^*(\cdot)P(\cdot)$. We immediately infer that
		\begin{equation}
			p(t) = \Psi(t,0)p(0), \quad \text{and} \quad q(t) = -\Psi^*(T,t)q(T),
		\end{equation}
		for all $t\in(0,T)$, where $\Psi(\cdot,\cdot)$ is the transition matrix associated with $L(\cdot)$. Note that the initial and terminal states in the transformed coordinates are
		\begin{equation}
			\begin{aligned}
				p(0) &= \big(I+E(0)P(0)\big)\delta y(0) + E(0)\delta \lambda(0), \\
				q(T) &= P(T)\delta y(T) + \delta \lambda(T).
			\end{aligned}
		\end{equation}
		By Lemma~\ref{upperbound}, we have the stability estimate for the time-varying system:
		\begin{equation}\label{22}
			\|\delta y(T)\| + \|\delta \lambda(0)\| \leq c \big(\|\delta y(0)\| + \|\delta \lambda(T)\|\big),
		\end{equation} for some constant $c > 0$ independent of $T$. As stated in Remark \ref{du120505}, there exists a positive constant $\nu$ such that the transition matrix satisfies the exponential decay bounds
		\begin{equation}\label{23}
			\|\Psi(t,0)\| \leq ce^{-\nu t} \quad \text{and} \quad \|\Psi(T,t)\| \leq ce^{-\nu (T-t)}.
		\end{equation}
		Combining the inverse of the dichotomy transformation with \eqref{22} and \eqref{23} directly yields the estimate
		\begin{equation*}
			\|\delta y(t)\| + \|\delta \lambda(t)\| \leq c(e^{-\nu t} + e^{-\nu (T-t)}), \quad t\in(0,T).
		\end{equation*}
		Finally, the error estimate for the control follows from the relation $\delta u(\cdot) = Q^{-1}(\cdot)B^*(\cdot)\delta \lambda(\cdot)$, which concludes the proof of \eqref{19}.
	\end{proof}
	\begin{figure*}[t]        % t=top, b=bottom, 不能写 h
		\centering
		\begin{subfigure}[b]{0.32\linewidth}   % 用 \textwidth 做基准
			\includegraphics[width=\linewidth]{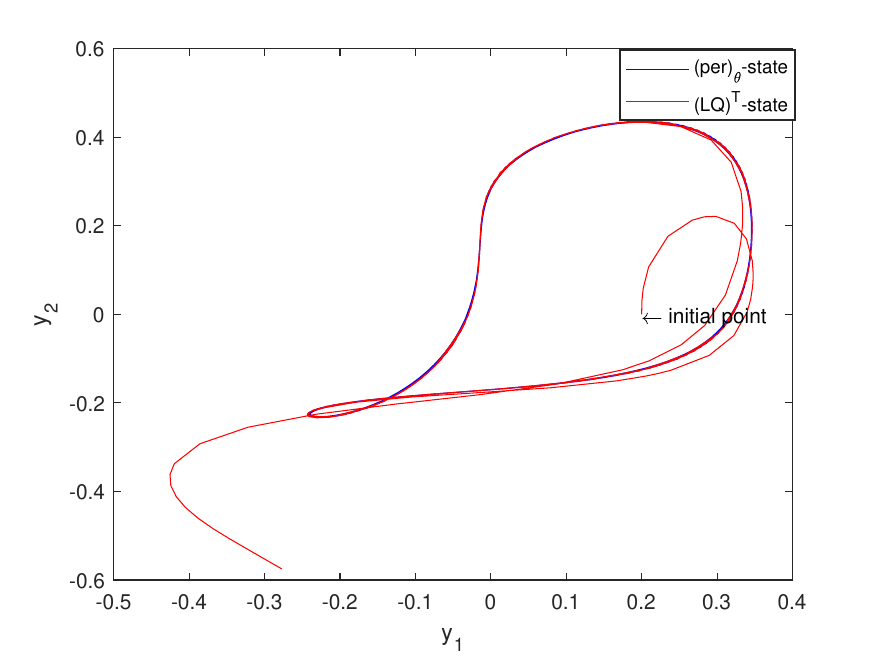}
			\caption{State comparison}
		\end{subfigure}\hfill
		\begin{subfigure}[b]{0.32\linewidth}
			\includegraphics[width=\linewidth]{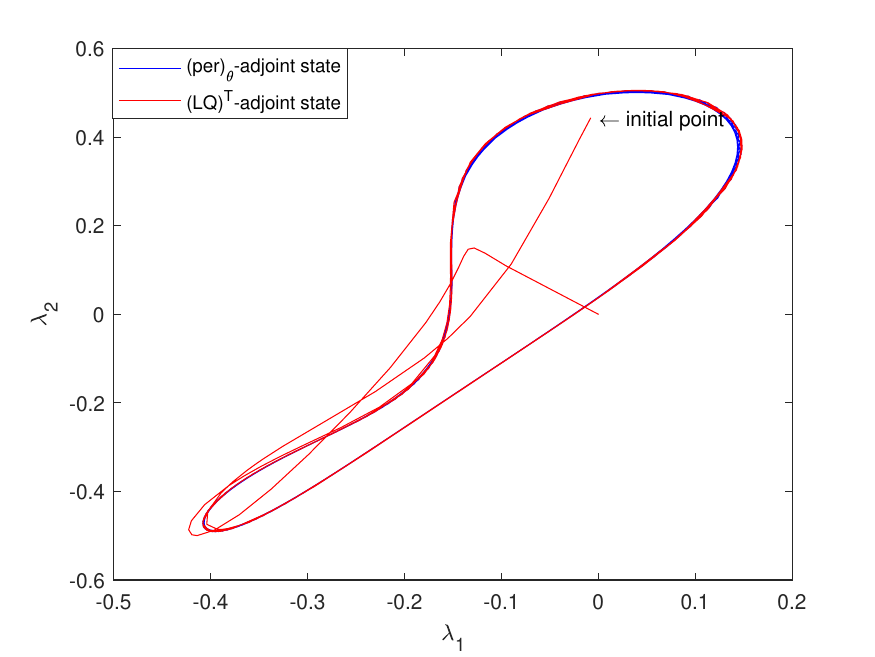}
			\caption{Adjoint state comparison}
		\end{subfigure}\hfill
		\begin{subfigure}[b]{0.32\linewidth}
			\includegraphics[width=\linewidth]{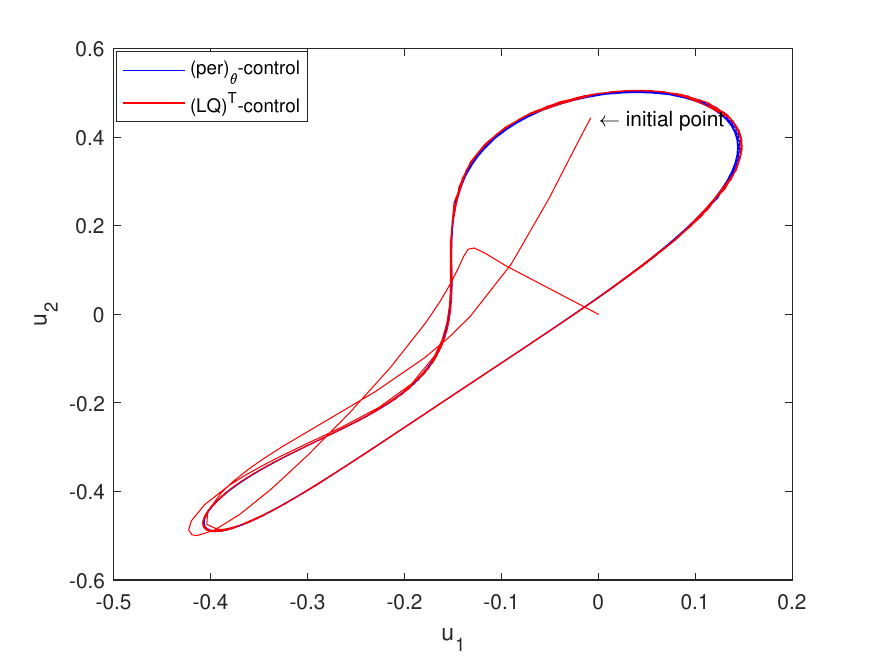}
			\caption{Control comparison}
		\end{subfigure}
		\caption{Turnpike phenomenon}
		\label{lqper}
	\end{figure*}
	\begin{remark}
		It is worth noting from the proof that the exponential decay rate $\nu$ in \eqref{19} is explicitly governed by the transition matrix associated with the optimal closed-loop matrix $A(\cdot)-B(\cdot)Q^{-1}(\cdot)B^*(\cdot)P(\cdot)$.
	\end{remark}
	As a consequence of Theorem \ref{tp}, the optimal extremal $(y_\theta(\cdot),\lambda_\theta(\cdot),u_\theta(\cdot))$ of the periodic problem can be utilized to approximate the optimal value of the finite-horizon problem $(\text{LQ})^T$ when $T$ is sufficiently large. Inspired by \cite[Theorem 2.2]{PZ13}, we establish the following asymptotic limit for the average cost.
	
	\begin{corollary}\label{prop:limit}
		Under the assumptions of Theorem \ref{tp}, the minimal average cost of $(\text{LQ})^T$ converges to the minimal average cost of $(\text{Per})_\theta$ as $T \to \infty$, i.e.,
		\begin{equation}\label{alim}
			\lim_{T \to \infty} \frac{C_{T}(u^T)}{T} = \frac{C_{\theta}(u_\theta)}{\theta}.
		\end{equation}
	\end{corollary}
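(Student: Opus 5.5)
The plan is to use the turnpike estimate of Theorem~\ref{tp} to compare the integrand of $C_T(u^T)$ with that of the periodic extremal, and then read off the average via periodicity. Write
\begin{equation*}
f^T(t)=\tfrac12\bigl(\|C(t)(y^T(t)-y_d(t))\|^2+\|Q^{1/2}(t)(u^T(t)-u_d(t))\|^2\bigr),
\end{equation*}
and let $f_\theta(t)$ be the same expression with $(y_\theta,u_\theta)$ in place of $(y^T,u^T)$, all extended $\theta$-periodically. Then $C_T(u^T)=\int_0^T f^T\,\mathrm dt$.

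The first step is the periodic part. The function $f_\theta$ is $\theta$-periodic and integrable, with $\int_0^\theta f_\theta\,\mathrm dt=C_\theta(u_\theta)$. Writing $T=k\theta+r$ with $k=\lfloor T/\theta\rfloor$ and $0\le r<\theta$, we get
\begin{equation*}
\int_0^T f_\theta\,\mathrm dt=k\,C_\theta(u_\theta)+O(1),
\end{equation*}
so $\frac1T\int_0^T f_\theta\,\mathrm dt\to C_\theta(u_\theta)/\theta$.

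The second step, which is the main one, is to show that $\int_0^T|f^T-f_\theta|\,\mathrm dt$ is bounded uniformly in $T$. Use the identity $\|a\|^2-\|b\|^2=\langle a-b,a+b\rangle$ with $a=C(y^T-y_d)$ and $b=C(y_\theta-y_d)$, and similarly for the control term. This gives
\begin{equation*}
|f^T-f_\theta|\le c\bigl(\|\delta y\|+\|\delta u\|\bigr)\bigl(\|\delta y\|+\|\delta u\|+\|y_\theta-y_d\|+\|u_\theta-u_d\|\bigr).
\end{equation*}
Here $y_\theta-y_d$ is bounded because both are continuous and periodic. The control is easier than it looks: $u_\theta-u_d=Q^{-1}B^*\lambda_\theta$ is continuous and periodic, hence bounded, and $\delta u=Q^{-1}B^*\delta\lambda$, so $u_d$ cancels and its mere $L^2$ regularity does not matter. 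The coefficients $C$, $Q^{1/2}$, $Q^{-1}$, $B$ are bounded by periodicity and continuity.

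By Theorem~\ref{tp}, $\|\delta y\|+\|\delta u\|\le c(e^{-\nu t}+e^{-\nu(T-t)})\le 2c$. Hence
\begin{equation*}
|f^T-f_\theta|\le c'\bigl(e^{-\nu t}+e^{-\nu(T-t)}\bigr),
\end{equation*}
whose integral over $[0,T]$ is at most $2c'/\nu$, independent of $T$.

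The point I expect to need care is checking that the constant $c$ in Theorem~\ref{tp} is uniform in $T$ for the fixed $y_0$. This follows from the proof of that theorem: the data $\delta y(0)$ and $\delta\lambda(T)$ are bounded by $\|y_0\|+\sup\|y_\theta\|+\sup\|\lambda_\theta\|$, and the constant in Lemma~\ref{upperbound} is independent of $T$.

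Combining the two steps gives
\begin{equation*}
\Bigl|\frac{C_T(u^T)}{T}-\frac{C_\theta(u_\theta)}{\theta}\Bigr|\le\frac{2c'/\nu+O(1)}{T}\to0,
\end{equation*}
which is \eqref{alim}, with rate $O(1/T)$.
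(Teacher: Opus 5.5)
Your proof is correct, but it is organized differently from the paper's. The paper fixes $\epsilon>0$ and uses Theorem~\ref{tp} to choose $\tau$ so that the two extremals are $\epsilon$-close on $[\tau,T-\tau]$. It then splits $C_T(u^T)-\frac{T}{\theta}C_\theta(u_\theta)$ into three pieces:
\begin{itemize}
\item a middle term over $n_0$ full periods, bounded by $M_3T\epsilon$ through a local Lipschitz bound on the running cost;
\item two boundary layers of total length at most $2\tau+\theta$, on which $f^0(y^T,u^T)$ is only bounded by a constant;
\item the leftover fraction of a period of the periodic cost.
\end{itemize}
Dividing by $T$, then letting $T\to\infty$ and $\epsilon\to0$, gives the limit, but with no rate.

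You instead compare $f^T$ with the periodically extended $f_\theta$ on the whole interval $[0,T]$, and integrate the exponential turnpike bound directly. This gives $\int_0^T|f^T-f_\theta|\,\mathrm{d}t\le 2c'/\nu$ uniformly in $T$, so you avoid the $\epsilon$--$\tau$ bookkeeping and obtain the explicit rate $O(1/T)$. The paper's splitting, even with $\tau\approx\nu^{-1}\log T$ chosen optimally, only yields $O(\log T/T)$. The reason is that it bounds the boundary-layer cost of $(y^T,u^T)$ and the partial-period cost of the periodic extremal separately, each at size $O(\tau)$, instead of letting them nearly cancel as your pointwise comparison does.

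Your observation that $u^T-u_d=Q^{-1}B^*\lambda^T$ and $u_\theta-u_d=Q^{-1}B^*\lambda_\theta$ is also useful. It removes the merely $L^2$ datum $u_d$ from the comparison, and so justifies the boundedness and Lipschitz claims that the paper states rather loosely.

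Finally, the uniformity in $T$ that you flag as needing care is already part of the statement of Theorem~\ref{tp}: there the constants $c$ and $\nu$ are fixed before $T$. Your argument via Lemma~\ref{upperbound} is consistent with this, but it is not an additional step you need to supply.
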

	\begin{proof}
		Let $f^0(y,u) = \frac{1}{2} \|C(t)(y-y_d(t))\|^2 + \frac{1}{2} \|Q^{1/2}(t)(u-u_d(t))\|^2$ denote the running cost. According to Theorem \ref{tp}, for any given $\epsilon > 0$, there exists a constant $\tau > 0$ such that
		\begin{multline}\label{tpepsion}
			\|y^T(t)-y_\theta(t)\|+ \|\lambda^T(t)-\lambda_\theta(t)\| 
			+\|u^T(t)-u_\theta(t)\| \\ \leq \epsilon ,\quad t\in[\tau,T-\tau],
		\end{multline}
		while over the entire interval $t\in[0,T]$, the difference is uniformly bounded independent of $T$:
		\begin{multline}\label{tpM}
			\|y^T(t)-y_\theta(t)\|+ \|\lambda^T(t)-\lambda_\theta(t)\| 
			+\|u^T(t)-u_\theta(t)\|  \\
			\leq c(1+e^{-T})\leq C ,\quad t\in[0,T].
		\end{multline}
		Define $n_0 = \max \{ n \in \mathbb{N} \mid n\theta < T-2\tau \}$ and $T_0 = \tau + n_0\theta$. We decompose the difference between the average costs as follows:
		\begin{equation}
			\begin{split}
				&C_{T}(u^T) - \frac{T}{\theta} C_{\theta}(u_\theta) \\&= \int_\tau^{T_0} \big(f^0(y^T(t),u^T(t)) - f^0(y_\theta(t),u_\theta(t))\big)\,\mathrm{d}t \\
				&\quad + \left(\int_0^\tau + \int_{T_0}^T\right) f^0(y^T(t),u^T(t))\,\mathrm{d}t - \left(\frac{T}{\theta} - n_0\right)C_{\theta}(u_\theta) \\
				&= I_1 + I_2 + I_3.
			\end{split}
		\end{equation}
		Since the coefficients are continuous and periodic, and the variables are uniformly bounded due to \eqref{tpM}, the running cost $f^0$ is locally Lipschitz. Thus, from \eqref{tpepsion}, there exists a constant $M_3 > 0$ such that
		\begin{equation*}
			\big| f^0(y^T(t),u^T(t)) - f^0(y_\theta(t),u_\theta(t)) \big| \leq M_3 \epsilon, \quad t \in [\tau, T_0],
		\end{equation*}
		which implies $|I_1| \leq M_3 (T-2\tau)\epsilon \leq M_3 T \epsilon$. Furthermore, the uniform boundedness \eqref{tpM} guarantees that $f^0(y^T(t),u^T(t))$ is bounded by some constant $M_4$. The integration domains for $I_2$ have a total length of $\tau + (T - T_0) \leq \tau + (\tau + \theta) = 2\tau + \theta$. Hence,
		\begin{equation}\label{i2}
			|I_2| \leq (2\tau + \theta)M_4 := M_5.
		\end{equation}
		For the remaining term $I_3$, since $0 \leq \frac{T}{\theta} - n_0 \leq \frac{2\tau}{\theta} + 1$, we have
		\begin{equation}\label{i3}
			|I_3| \leq \left(1+\frac{2\tau}{\theta}\right) C_{\theta}(u_\theta) := M_6.
		\end{equation}
		Combining these estimates yields
		\begin{equation}
			\left| \frac{C_{T}(u^T)}{T} - \frac{C_{\theta}(u_\theta)}{\theta} \right| \leq \frac{|I_1| + |I_2| + |I_3|}{T} \leq M_3 \epsilon + \frac{M_5 + M_6}{T}.
		\end{equation}
		Taking the limit as $T \to \infty$, the right-hand side is bounded by $M_3 \epsilon$. Since $\epsilon > 0$ is arbitrary, the limit \eqref{alim} follows.
	\end{proof}

	\subsection{Numerical Simulation}
	
	We provide a numerical example to illustrate the preceding theoretical discussion. Consider the finite-horizon problem $(\text{LQ})^T$ governed by the following coefficients:
	\begin{equation*}
		\begin{split}
			A(t)=
			\begin{pmatrix}
				\sin t & \cos^2 t\\
				e^{-\sin t} & -1+\cos t
			\end{pmatrix},\\
			B(t)=C(t)=Q(t)=
			\begin{pmatrix}
				1 & 0\\
				0 & 1
			\end{pmatrix}.
		\end{split}
	\end{equation*} The tracking terms are given by
	\begin{equation*}
		y_d(t)=
		\begin{pmatrix}
			\sin t\\
			\cos t
		\end{pmatrix},\quad
		u_d(t)=
		\begin{pmatrix}
			0\\
			0
		\end{pmatrix}.
	\end{equation*}
	All coefficient matrices of this optimal control problem coincide identically with those defined in the periodic setting of Subsection \ref{ans}. The optimal extremal $(y^T(\cdot),\lambda^T(\cdot),u^T(\cdot))$ is computed via the MATLAB function \texttt{bvp4c}, setting the fixed time horizon to $T=30$ and the initial state to $y(0)=(0.2,0)^\top$, while leaving the final state free (which corresponds to $\lambda(T)=0$).
	
	As depicted in Figure \ref{lqper}, the optimal finite-horizon extremal $(y^T(\cdot),\lambda^T(\cdot),u^T(\cdot))$ (plotted in red) remains close to the periodic optimal extremal $(y_\theta(\cdot),\lambda_\theta(\cdot),u_\theta(\cdot))$ (plotted in blue) for most of the time interval, except near the initial time and the final time. This behavior is a characteristic feature of the exponential turnpike phenomenon, and it validates the periodic dichotomy as an effective theoretical and computational tool.
	
	Furthermore, if a rapid estimate of the minimal cost is required, one can leverage the asymptotic approximation formula derived from Proposition \ref{prop:limit}:
	
	It is convenient to use the following approximation formula derived from \eqref{alim}
	\begin{equation*}
		C_T(u^T) \approx  \frac{T}{\theta} C_\theta(u_\theta),
	\end{equation*}
	which holds accurately for sufficiently large $T$. In this specific simulation ($T=30, \theta=2\pi$), the actual cost $C_T(u^T)$ equals 21.4649, while the approximation $(T/\theta)C_\theta(u_\theta)$ evaluated via the MATLAB function \texttt{integral} yields 21.6937, demonstrating an excellent agreement.
	
	\section{Other Applications}\label{Ap3}
	\subsection{Asymptotic Behavior of the Riccati Equation}
	Consider the following matrix Riccati differential equation with a terminal condition:
	\begin{equation}\label{PG}
		\left\{
		\begin{aligned}
			&\dot{P}(t)+A^*(t)P(t)+P(t)A(t)-P(t)B(t)Q^{-1}(t)B^*(t)P(t)\\
			&\quad +C^*(t)C(t)=0,\quad t\in(-\infty,T],\\
			&P(T)=G>0.
		\end{aligned}
		\right.
	\end{equation}
	According to \cite[Theorem~19.9]{Poznyak08}, the solution $P(\cdot)$ to \eqref{PG} can be constructed by solving an associated linear ordinary differential equation.
	
	\begin{lemma}[{\cite[Theorem~19.9]{Poznyak08}}]\label{PG1}
		Let $P(\cdot)$ be a symmetric nonnegative solution of \eqref{PG} defined on $[0,T]$.
		Then there exist two functional matrices $X(\cdot), Y(\cdot) \in C^1([0,T];\mathbb R^{n \times n})$ satisfying the following linear ODE
		\begin{equation}
			\begin{pmatrix}
				\dot{X}(t)\\\dot{Y}(t)
			\end{pmatrix}=
			\begin{pmatrix}
				A(t) & -B(t)Q^{-1}(t)B^*(t)\\
				-C^*(t)C(t)&-A^*(t)
			\end{pmatrix}
			\begin{pmatrix}
				X(t)\\ Y (t)
			\end{pmatrix},
		\end{equation}
		with the terminal conditions
		\begin{equation}
			X(T)=I,\quad Y(T)=G,
		\end{equation}
		such that the solution $P(\cdot)$ of the Riccati equation \eqref{PG} can be represented as
		\begin{equation}
			P(\cdot)=Y(\cdot)X^{-1}(\cdot).
		\end{equation}
	\end{lemma}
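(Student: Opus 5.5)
We use the classical Radon-type linearization. Let $X(\cdot),Y(\cdot)$ solve the linear Hamiltonian system with terminal data $X(T)=I$, $Y(T)=G$. The plan is to show that $X(t)$ is invertible on $[0,T]$ and that $P(t):=Y(t)X^{-1}(t)$ solves \eqref{PG}. Uniqueness of solutions to the Riccati Cauchy problem (locally Lipschitz right-hand side) then identifies this quotient with the given symmetric nonnegative solution $P(\cdot)$.

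We proceed in the reverse direction, which avoids proving invertibility of $X$ a priori. Start from the given solution $P(\cdot)$ on $[0,T]$. Let $X(\cdot)$ be the solution of the linear matrix ODE
\begin{equation*}
\dot X(t)=\big(A(t)-B(t)Q^{-1}(t)B^*(t)P(t)\big)X(t),\qquad X(T)=I,
\end{equation*}
and set $Y(t):=P(t)X(t)$.

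Since $X(\cdot)$ is a fundamental matrix of a linear system with continuous coefficients and $X(T)=I$, Liouville's formula gives $\det X(t)=\exp\big(-\int_t^T \mathrm{tr}\,L(s)\,\mathrm{d}s\big)\neq0$. Hence $X(t)$ is invertible on $[0,T]$ and $P=YX^{-1}$. The terminal conditions $X(T)=I$ and $Y(T)=P(T)=G$ hold by construction. The first equation holds because
\begin{equation*}
\dot X=AX-BQ^{-1}B^*PX=AX-BQ^{-1}B^*Y.
\end{equation*}

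For the second equation, differentiate $Y=PX$:
\begin{equation*}
\dot Y=\dot PX+P\dot X.
\end{equation*}
Substitute $\dot P=-A^*P-PA+PBQ^{-1}B^*P-C^*C$ from \eqref{PG}. The terms $-PAX$ and $+PBQ^{-1}B^*PX$ cancel against $P\dot X=PAX-PBQ^{-1}B^*PX$. What remains is
\begin{equation*}
\dot Y=-A^*PX-C^*CX=-C^*CX-A^*Y,
\end{equation*}
which is the required second row. Regularity $X,Y\in C^1$ follows from the continuity of the coefficients and of $P$.

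The only delicate point is the invertibility of $X$. Had we started from the linear system and defined $P=YX^{-1}$ directly, we would need to show $X(t)$ does not become singular, which is equivalent to the Riccati solution not blowing up. Constructing $X$ from $P$ sidesteps this, since Liouville's formula gives invertibility at once. Symmetry and nonnegativity of $P$ are not needed for the representation itself; they only guarantee, as in the cited theorem, that the solution exists on all of $[0,T]$.
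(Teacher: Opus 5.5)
Your argument is correct. The paper gives no proof of this lemma; it imports it from Poznyak's Theorem~19.9. So there is no internal route to compare with, and what you supply is a complete, self-contained verification.

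Going from $P$ to $(X,Y)$ is the cleanest way to do it. You take $X$ to be the closed-loop transition matrix of $A-BQ^{-1}B^*P$ normalized by $X(T)=I$, and set $Y=PX$. The usual forward route integrates the Hamiltonian system from $(I,G)$ and differentiates $YX^{-1}$, which needs a separate argument that $X$ never becomes singular. In your construction nonsingularity comes for free from Liouville's formula. The Riccati equation \eqref{PG} enters only through one cancellation in $\dot Y=\dot P X+P\dot X$. This also makes clear that symmetry, nonnegativity and $G>0$ are irrelevant to the representation itself; they matter only for the existence of $P$ on the interval.

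Two small remarks:
\begin{enumerate}
\item Your opening paragraph announces the forward plan, with an appeal to uniqueness for the Riccati Cauchy problem, and then drops it. You can delete it.
\item State explicitly that, by uniqueness for linear terminal-value problems, your $(X,Y)$ is \emph{the} solution of the Hamiltonian system with $X(T)=I$, $Y(T)=G$. Together with your construction, this gives nonsingularity of $X(t)$ on the whole existence interval of $P$. That is what the paper needs when it writes $X^{-1}(t)$ for all $t\in(-\infty,T]$ in the proof of Theorem~\ref{thm:analy_riccati}.
\end{enumerate}
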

	
	In the following theorem, we also assume that all the coefficient matrices $A(\cdot),\,B(\cdot),\,C(\cdot)$ and $Q(\cdot)$ are $\theta$-periodic in time.
	\begin{theorem}\label{thm:analy_riccati}
		Assume that $(A(\cdot),B(\cdot))$ is exponentially $\theta$-periodic stabilizable, and that $(A(\cdot),C(\cdot))$ is exponentially $\theta$-periodic detectable. Then, the solution $P(\cdot)$ of \eqref{PG} satisfies 
		\begin{equation}\label{eq:riccati_esti}
			\|P(t)-P_0(t)\|\leq C e^{-2\nu(T-t)}, \quad \forall t \in (-\infty, T],
		\end{equation}
		for some constants $C>0$ and $\nu>0$, where $P_0(\cdot)$ is the unique positive semidefinite $\theta$-periodic solution to the periodic Riccati equation \eqref{du12052}.
	\end{theorem}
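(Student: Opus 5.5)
The plan is to combine the linear representation of Lemma~\ref{PG1} with the periodic dichotomy transformation of Theorem~\ref{dudic} (built from $P_0$ and $E$), and then read off $P-P_0$ directly from the decoupled variables.

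\textbf{Step 1: put $(X,Y)$ into the Hamiltonian system of Theorem~\ref{dudic}.} Let $X,Y$ be as in Lemma~\ref{PG1}, extended to $(-\infty,T]$; this is possible since \eqref{PG} is solvable on every $[t,T]$ with $P\ge 0$. Set $\Lambda:=-Y$. Then $(X,\Lambda)$ solves the coupled system of Theorem~\ref{dudic}, column by column, with $X(T)=I$ and $\Lambda(T)=-G$. Apply the transformation \eqref{ducan1} with $P_0$ in place of $P$:
\begin{equation*}
p=(I+EP_0)X+E\Lambda,\qquad q=P_0X+\Lambda=P_0X-Y .
\end{equation*}
The decoupled equations give, for $t\le T$,
\begin{equation*}
p(t)=\Psi(t,T)p(T),\qquad q(t)=\Psi^*(T,t)q(T),
\end{equation*}
with $p(T)=I-E(T)(G-P_0(T))$ and $q(T)=P_0(T)-G$. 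The inverse transformation yields $X=p-Eq$. Using $P=YX^{-1}$ we obtain the key identity
\begin{equation*}
P-P_0=(Y-P_0X)X^{-1}=-q\,X^{-1}.
\end{equation*}

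\textbf{Step 2: invertibility of $p(T)$ (main obstacle).} Suppose $p(T)v=0$ with $v\neq0$. Then $p(t)v\equiv0$, so $X(t)v=-E(t)q(t)v$ and $Y(t)v=P_0X(t)v-q(t)v$. Since $E$ and $P_0$ are bounded and $\|\Psi^*(T,t)\|\le ce^{-\nu(T-t)}$, both $X(t)v$ and $Y(t)v$ tend to $0$ as $t\to-\infty$. On the other hand, a direct computation gives
\begin{equation*}
\tfrac{d}{dt}\langle Xv,Yv\rangle=-\|CXv\|^2-\|Q^{-1/2}B^*Yv\|^2\le0 .
\end{equation*}
Hence $\langle Xv,Yv\rangle$ is nonincreasing, so $0=\lim_{t\to-\infty}\langle Xv,Yv\rangle\ge\langle v,Gv\rangle>0$, a contradiction. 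So $p(T)$ is invertible.

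\textbf{Step 3: factorization of $X$ and the estimate.} Write
\begin{equation*}
X(t)=\Psi(t,T)\bigl[I-R(t)\bigr]p(T),\qquad R(t):=\Psi(T,t)E(t)\Psi^*(T,t)q(T)p(T)^{-1}.
\end{equation*}
By Remark~\ref{du120505}, $\|R(t)\|\le c'e^{-2\nu(T-t)}$. Choose $t_0$, independent of $T$, with $c'e^{-2\nu t_0}\le1/2$. For $T-t\ge t_0$ the bracket then has inverse of norm at most $2$, and
\begin{equation*}
P-P_0=-\Psi^*(T,t)\,q(T)\,p(T)^{-1}\bigl[I-R(t)\bigr]^{-1}\Psi(T,t),
\end{equation*}
whose norm is at most $Ce^{-2\nu(T-t)}$.

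\textbf{Step 4: the short interval and uniformity in $T$.} For $T-t\in[0,t_0]$, both $P$ and $P_0$ are bounded, so the estimate holds after enlarging $C$. The quantities $p(T)$ and $q(T)$ depend on $T$ only through $T$ modulo $\theta$, because $P_0$ and $E$ are $\theta$-periodic. By continuity of $s\mapsto(I-E(s)(G-P_0(s)))^{-1}$ on a compact period, $\|p(T)^{-1}\|$ is bounded uniformly in $T$. Likewise, the solution of \eqref{PG} on $[T-t_0,T]$ depends only on $T$ modulo $\theta$, so it is bounded uniformly in $T$. Therefore $C$ is independent of $T$.
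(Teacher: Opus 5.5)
Your proposal is correct and follows essentially the same route as the paper. The paper also sets $\Lambda=-Y$, applies the dichotomy transformation built from $P_0$ and $E$, and derives $P-P_0=-qX^{-1}$ and the factorization $X(t)=\Psi(t,T)[\,\cdot\,]$ that produces the rate $e^{-2\nu(T-t)}$. Your Step~2, which combines the monotonicity of $\langle Xv,Yv\rangle$ with the decay of $q(t)v$, actually proves that $p(T)=I+E(T)(P_0(T)-G)$ is invertible for every $G>0$; the paper only assumes this ``holds generically.'' Your Step~4 also makes the uniformity of $C$ in $T$ explicit, so your version closes gaps that the paper's proof leaves open.
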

	\begin{remark}
		The constant $\nu>0$ corresponds to the exponential decay rate associated with the evolution operator $\Psi(\cdot,\cdot)$ introduced in Remark~\ref{du120505}. In particular, the theorem shows that the solution $P(\cdot)$ converges exponentially to the periodic solution $P_0(\cdot)$ as $t\to -\infty$.
	\end{remark}
	\begin{remark}
		This theorem provides a justification for the numerical phenomenon observed in Figure~\ref{p11} in Section \ref{ans}. It shows that integrating the periodic Riccati equation backward in time from an arbitrary positive definite terminal state will converge to the stabilizing positive semidefinite periodic solution.
	\end{remark}
	\begin{remark}
		It is worthy to mention that in \cite[Proposition~3.1]{TZZ2}, the authors proved the same exponential estimate \eqref{eq:riccati_esti} for Riccati equation. Their approach relies on the Yosida approximation of the unbounded operators and the evaluation of the time derivative of a specific quadratic form along an auxiliary closed-loop trajectory. 
	\end{remark}
	\begin{proof}[Proof of Theorem \ref{thm:analy_riccati}]
		Under the stabilizability and detectability assumptions, Lemma \ref{ric} guarantees the existence of a unique positive semidefinite $\theta$-periodic solution $P_0(\cdot)$ to \eqref{du12052}.
		
		To apply the periodic dichotomy transformation established in Theorem \ref{dudic}, we perform a change of variables to align the signs. Let $y(\cdot) = X(\cdot)$ and $\lambda(\cdot) = -Y(\cdot)$. The system governing $(X,Y)$ is then transformed into the standard Hamiltonian system:
		\begin{equation*}
			\begin{pmatrix}
				\dot{y}(t)\\ \dot{\lambda}(t)
			\end{pmatrix}=
			\begin{pmatrix}
				A(t) & B(t)Q^{-1}(t)B^*(t)\\
				C^*(t)C(t)&-A^*(t)
			\end{pmatrix}
			\begin{pmatrix}
				y(t)\\ \lambda (t)
			\end{pmatrix},  
		\end{equation*}for $t\in (-\infty,T]$.
		
		Applying the dichotomy transformation \eqref{ducan1}, we introduce the auxiliary variables
		\begin{equation*}
			\begin{aligned}
				\begin{pmatrix} \hat{X}(t) \\ \hat{Y}(t) \end{pmatrix} &:= 
				\begin{pmatrix}
					I+E(t)P_0(t) & E(t)\\
					P_0(t) & I
				\end{pmatrix}
				\begin{pmatrix} y(t) \\ \lambda(t) \end{pmatrix}\\
				&=
				\begin{pmatrix}
					I+E(t)P_0(t) & -E(t)\\
					P_0(t) & -I
				\end{pmatrix}
				\begin{pmatrix} X(t) \\ Y(t) \end{pmatrix}.
			\end{aligned}
		\end{equation*}
		This transformation decouples the system into
		\begin{equation*}
			\begin{pmatrix} \dot{\hat{X}}(t)\\ \dot{\hat{Y}}(t) \end{pmatrix}
			=
			\begin{pmatrix}
				L(t) & 0 \\
				0 & -L^*(t)
			\end{pmatrix}
			\begin{pmatrix} \hat{X}(t)\\ \hat{Y}(t) \end{pmatrix},\quad t\in(-\infty,T],
		\end{equation*}
		where $L(\cdot)=A(\cdot)-B(\cdot)Q^{-1}(\cdot)B^*(\cdot)P_0(\cdot)$ is the exponentially stable closed-loop matrix. Consequently,
		\begin{equation}
			\begin{aligned}
				\hat{X}(t) &= \Psi(t,T)\hat{X}(T), \\
				\hat{Y}(t) &= \Psi^*(T,t)\hat{Y}(T),
			\end{aligned}
		\end{equation}
		where $\Psi(\cdot,\cdot)$ is the transition matrix associated with $L(\cdot)$. 
		
		Using the inverse of the dichotomy transformation, we obtain
		\begin{equation}\label{inv_trans_XY}
			\begin{aligned}
				X(t) &= \hat{X}(t) - E(t)\hat{Y}(t), \\
				Y(t) &= P_0(t)\hat{X}(t) - \big(I+P_0(t)E(t)\big)\hat{Y}(t),
			\end{aligned}
		\end{equation} for $t\in(-\infty,T]$.
		Subtracting $P_0(\cdot)X(\cdot)$ from $Y(\cdot)$ yields the identity:
		\begin{equation}
			Y(t) - P_0(t)X(t) = -\hat{Y}(t),\quad \forall t\in(-\infty,T].
		\end{equation}
		Therefore,
		\begin{equation}\label{P_error_exact}
			P(t) - P_0(t) = Y(t)X^{-1}(t) - P_0(t) = -\hat{Y}(t)X^{-1}(t),
		\end{equation}for $t\in(-\infty,T]$.
		
		Next, we estimate the norm of the right-hand side. Substituting $\hat{X}(\cdot) = \Psi(\cdot,T)\hat{X}(T)$ and $\hat{Y}(\cdot) = \Psi^*(T,\cdot)\hat{Y}(T)$ into the expression for $X(\cdot)$ yields
		\begin{equation}
			X(t) = \Psi(t,T)\hat{X}(T) - E(t)\Psi^*(T,t)\hat{Y}(T),\quad\forall t\in(-\infty,T].
		\end{equation}
		For $t\in(-\infty,T]$, define
		\begin{equation}
			S(t) := \Psi(T,t)X(t) = \hat{X}(T) - \Psi(T,t)E(t)\Psi^*(T,t)\hat{Y}(T).
		\end{equation}
		From Remark \ref{du120505}, we have the exponential decay estimate $\|\Psi(T,t)\| \leq c e^{-\nu(T-t)}$. Since $E(\cdot)$ is bounded, the term $\Psi(T,t)E(t)\Psi^*(T,t)\hat{Y}(T)$ decays exponentially to zero as $t \to -\infty$. Assuming the terminal matrix $\hat{X}(T) = I + E(T)(P_0(T)-G)$ is invertible (which holds generically), it follows that $S(t)$ is invertible for $T-t$ sufficiently large, and its inverse $S^{-1}(t)$ is uniformly bounded. 
		
		Since $X(\cdot) = \Psi(\cdot,T)S(\cdot)$, we can write $X^{-1}(\cdot) = S^{-1}(\cdot)\Psi(T,\cdot)$. Substituting this and $\hat{Y}(\cdot) = \Psi^*(T,\cdot)\hat{Y}(T)$ into \eqref{P_error_exact}, we obtain
		\begin{equation}
			P(t) - P_0(t) = -\Psi^*(T,t)\hat{Y}(T) S^{-1}(t) \Psi(T,t),
		\end{equation}for $t\in(-\infty,T]$.
		Taking the norm on both sides, we deduce
		\begin{equation}
			\begin{split}
				\|P(t) - P_0(t)\| &\leq \|\Psi^*(T,t)\| \|\hat{Y}(T)\| \|S^{-1}(t)\| \|\Psi(T,t)\| \\
				&\leq c^2 \|\hat{Y}(T)\| \|S^{-1}(t)\| e^{-2\nu(T-t)}.
			\end{split}
		\end{equation}
		Since $\|S^{-1}(t)\|$ is uniformly bounded, there exists a constant $C > 0$ such that
		\begin{equation}
			\|P(t) - P_0(t)\| \leq C e^{-2\nu(T-t)}, \quad \forall t \in (-\infty, T].
		\end{equation}
		This completes the proof.
	\end{proof}
	
	\subsection{Representation of Cauchy problem}
	In what follows, we assume that the coefficient matrices $A(\cdot), B(\cdot), C(\cdot)$, and $Q(\cdot)$ are continuous and time-periodic with period $\theta$. Furthermore, we assume that the pair $(A(\cdot),B(\cdot))$ is exponentially $\theta$-periodic stabilizable and that the pair $(A(\cdot),C(\cdot))$ is exponentially $\theta$-periodic detectable.
	\begin{theorem}\label{ode0}
		Consider the following linear coupled system
		\begin{equation}\label{ode1}
			\begin{pmatrix}
				\dot{y}(t)\\\dot{\lambda}(t)
			\end{pmatrix}=
			\begin{pmatrix}
				A(t) & B(t)Q^{-1}(t)B^*(t)\\
				C^*(t)C(t)&-A^*(t)
			\end{pmatrix}
			\begin{pmatrix}
				y(t)\\ \lambda (t)
			\end{pmatrix},
		\end{equation}
		for $t\in (0,T)$, subject to the initial conditions
		\begin{equation*}
			y(0)=y_0,\quad \lambda(0)=\lambda_0.
		\end{equation*}
		The solution to \eqref{ode1} is given by
		\begin{equation}
			\left\{
			\begin{aligned}
				y(t) &= p(t)-E(t)q(t),\\
				\lambda(t) &= -P_0(t)p(t)+\big(I+P_0(t)E(t)\big)q(t),
			\end{aligned}
			\right.\quad t\in [0,T],
		\end{equation}
		where $P_0(\cdot)$ and $E(\cdot)$ are the solutions of the periodic matrix Riccati and Lyapunov differential equations given in Lemmas \ref{ric} and \ref{lya}, respectively. The auxiliary variables $p(\cdot)$ and $q(\cdot)$ are obtained through the dichotomy transformation \eqref{ducan1} and satisfy the decoupled system:
		\begin{equation*}
			\begin{split}
				\begin{pmatrix}\dot{p}(t)\\ \dot{q}(t)\end{pmatrix}
				=
				\begin{pmatrix}
					L(t) & 0 \\
					0 & -L^*(t)
				\end{pmatrix}
				&
				\begin{pmatrix}p(t)\\ q(t)\end{pmatrix},\quad t\in(0,T),
			\end{split}
		\end{equation*}
		with $L(\cdot):=A(\cdot)-B(\cdot)Q^{-1}(\cdot)B^*(\cdot)P_0(\cdot)$ and the initial boundary conditions $$p(0)=(I+E(0)P_0(0))y_0 + E(0)\lambda_0,\quad q(0)=P_0(0)y_0+\lambda_0.$$
	\end{theorem}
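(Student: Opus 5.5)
The plan is to apply Theorem~\ref{dudic} with $P=P_0$ directly to the solution $(y(\cdot),\lambda(\cdot))$ of the Cauchy problem \eqref{ode1}. Since the coefficients are continuous, \eqref{ode1} with $y(0)=y_0$, $\lambda(0)=\lambda_0$ has a unique $C^1$ solution on $[0,T]$. Define $(p(\cdot),q(\cdot))$ by the dichotomy transformation \eqref{ducan1}:
\begin{equation*}
\begin{pmatrix} p(t)\\ q(t)\end{pmatrix}=\mathcal T(t)\begin{pmatrix} y(t)\\ \lambda(t)\end{pmatrix},\qquad
\mathcal T(t)=\begin{pmatrix} I+E(t)P_0(t)&E(t)\\ P_0(t)&I\end{pmatrix}.
\end{equation*}
Here $P_0(\cdot)$ and $E(\cdot)$ are $C^1$, because they solve the Riccati and Lyapunov differential equations of Lemmas~\ref{ric} and~\ref{lya}. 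Hence $p$ and $q$ are $C^1$ on $[0,T]$.

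First, the identity \eqref{zuih1} established in the proof of Theorem~\ref{dudic} is pointwise in $t$. Differentiating the product therefore gives $(\dot p,\dot q)^* = \operatorname{diag}(L(t),-L^*(t))(p,q)^*$ on $(0,T)$. Restricting from $\mathbb R$ to $(0,T)$ is harmless, since the computation never uses the time domain.

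Second, evaluating the definition at $t=0$ gives the stated initial data. From the rows of $\mathcal T(0)$ we get $p(0)=(I+E(0)P_0(0))y_0+E(0)\lambda_0$ and $q(0)=P_0(0)y_0+\lambda_0$. Consequently $p(t)=\Psi(t,0)p(0)$, and $q(t)$ is the solution of $\dot q=-L^*q$ from $q(0)$, namely $q(t)=\Psi^*(0,t)q(0)$ by the argument of Lemma~\ref{varphi}. This step is only needed if one wants explicit formulas; the theorem merely asserts the decoupled system with these initial conditions.

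Third, we recover $(y,\lambda)$ using the inverse $\mathcal T(t)^{-1}=\mathcal T_2(t)\mathcal T_4(t)$ computed in the proof of Theorem~\ref{dudic}. The first row gives $y=p-Eq$. For the second row, $(-P_0,\,I+P_0E)$, one checks directly that $-P_0 p+(I+P_0E)q=\lambda$. Indeed, this expression equals $-P_0(I+EP_0)y-P_0E\lambda+(I+P_0E)P_0y+(I+P_0E)\lambda$, and the $y$-terms cancel because $P_0EP_0$ appears with opposite signs. Uniqueness of solutions to the linear system closes the argument. Equivalently, the function built from $(p,q)$ solves \eqref{ode1} with the correct initial data, so it coincides with the solution.

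There is no genuine obstacle. The only points needing care are two. One is that the derivative identity \eqref{zuih1} requires $\dot{\mathcal T}$, so the $C^1$ regularity of $P_0$ and $E$ must be noted. The other is that the formula for $\mathcal T^{-1}$ must be verified as a two-sided inverse, which follows from \eqref{n3191}.
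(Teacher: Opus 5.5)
Your proposal is correct and follows the same route as the paper, which omits the proof as a direct application of the dichotomy transformation of Theorem~\ref{dudic} together with its inverse $\mathcal T(t)^{-1}$ (as in the proof of Theorem~\ref{pee}). Your check of the initial data at $t=0$ and of the identity $-P_0p+(I+P_0E)q=\lambda$ supplies exactly the details the paper leaves implicit.
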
The proof follows directly by applying the inverse of the dichotomy transformation \eqref{ducan1} demonstrated in Theorem \ref{pee}, and is therefore omitted.

%	\section*{Acknowledgment}
%	The authors are grateful to  Professor Emmanuel Tr\'elat for valuable discussion during the preparation of this paper.

	\begin{comment}
		\section*{Appendix}
		The following MATLAB code is used to solve the problem $(\text{Per})_\theta$.  
		The numerical results presented in Figure \ref{yper} are obtained with $n=50$ grid points.
	\end{comment}
	
	%\lstinputlisting[language=Matlab, label=lst:per]{per1.m}

\end{document}